\definecolor{dmagenta}{rgb}{.4,.1,.5}
\definecolor{dblue}{rgb}{.0,.0,.5}
\definecolor{mblue}{rgb}{.0,.0,.8}
\definecolor{ddblue}{rgb}{.0,.0,.4}
\definecolor{dred}{rgb}{.6,.0,.0}
\definecolor{dgreen}{rgb}{.0,.5,.0}
\definecolor{Eeom}{rgb}{.0,.0,.5}
\newtheorem{theorem}{Theorem}[section]
\newtheorem{corollary}{Corollary}[section]
\theoremstyle{definition}
\newtheorem{definition}{Definition}[section]
\newtheorem{assumption}{Assumption}[section]
\newtheorem{example}{Example}[section]
\newtheorem{remark}{Remark}[section]
\numberwithin{equation}{section}
\crefname{section}{Section}{Sections}
\crefname{subsection}{Subsection}{Subsections}
\crefname{condition}{Condition}{Conditions}
\crefname{hypothesis}{Hypothesis}{Conditions}
\crefname{assumption}{Assumption}{Assumptions}
\crefname{lemma}{Lemma}{Lemmas}
\crefname{claim}{Claim}{Claims}
\Crefname{figure}{Figure}{Figures}
\DeclareRobustCommand\widecheck[1]{{\mathpalette\@widecheck{#1}}}
\def\@widecheck#1#2{%
    \setbox\z@\hbox{\m@th$#1#2$}%
    \setbox\tw@\hbox{\m@th$#1%
       \widehat{%
          \vrule\@width\z@\@height\ht\z@
          \vrule\@height\z@\@width\wd\z@}$}%
    \dp\tw@-\ht\z@
    \@tempdima\ht\z@ \advance\@tempdima2\ht\tw@ \divide\@tempdima\thr@@
    \setbox\tw@\hbox{%
       \raise\@tempdima\hbox{\scalebox{1}[-1]{\lower\@tempdima\box
\tw@}}}%
    {\ooalign{\box\tw@ \cr \box\z@}}}
\def\subsection{\@startsection{subsection}{0}%
\z@{\linespacing\@plus\linespacing}{\linespacing}%
{\bf}}
\DeclareMathOperator{\Prob}{\mathbb{P}} 
\newcommand{\D}{\mathrm{d}}          
\newcommand{\RR}{\mathbb{R}}         
\newcommand{\Rd}{{\mathbb{R}^d}}       
\newcommand{\NN}{\mathbb{N}}         
\newcommand{\Ind}{\mathds{1}}            
\newcommand{\cC}{\mathcal{C}}     
\newcommand{\cD}{\mathcal{D}}     
\newcommand{\cH}{\mathcal{H}}
\newcommand{\cB}{\mathcal{B}}
\newcommand{\sH}{\mathscr{H}}
\newcommand{\cL}{\mathcal{L}}
\newcommand{\abs}[1]{\lvert#1\rvert}
\newcommand{\norm}[1]{\lVert#1\rVert}
\providecommand{\pro}[1]{(#1_t)_{t \geq 0}}
\providecommand{\semi}[1]{\{#1_t: t \geq 0\}}
\providecommand{\seq}[1]{(#1_n)_{n\in \mathbb{N}}}
\DeclareMathOperator{\Dom}{Dom}
\DeclareMathOperator{\diam}{diam}
\newcommand{\ex}{\mathbb{E}}
\DeclareMathOperator*{\esssup}{ess\,sup}
\DeclareMathOperator{\Psidel}{\Psi(-\Delta)}
\begin{document}

\title[Maximum principles]%
{\sc \textbf{Maximum principles for time-fractional Cauchy problems with spatially non-local components}}

\author{Anup Biswas and J\'{o}zsef L\H{o}rinczi}

\address{Anup Biswas \\
Department of Mathematics, Indian Institute of Science Education and Research, Dr. Homi Bhabha Road,
Pune 411008, India, anup@iiserpune.ac.in}

\address{J\'ozsef L\H{o}rinczi \\
Department of Mathematical Sciences, Loughborough University, Loughborough LE11 3TU, United Kingdom,
J.Lorinczi@lboro.ac.uk}

\date{}


\begin{abstract}
We show a strong maximum principle and an Alexandrov-Bakelman-Pucci estimate for the weak solutions of a
Cauchy problem featuring Caputo time-derivatives and non-local operators in space variables given in terms
of Bernstein functions of the Laplacian. To achieve this, first we propose a suitable meaning of a weak
solution, show their existence and uniqueness, and establish a probabilistic representation in terms of
time-changed Brownian motion. As an application, we also discuss an inverse source problem.
\end{abstract}
\keywords{Caputo time-derivatives, non-local operators, Bernstein functions of the Laplacian, non-local
Dirichlet problem, ABP estimate, strong maximum principle, Mittag-Leffler function, fractional Duhamel's principle}
\subjclass[2000]{35R11, 26A33, 35B50, 35R30}

\maketitle

\section{\bf Introduction}
Evolution equations featuring fractional time-derivatives currently receive much attention in both pure and applied
mathematics due to, on the one hand, new qualitative properties not encountered in the realm of PDE and, on the other
hand, for their novel modelling capabilities in science \cite{P99,MBSB,BM01,MNV,OB09,K15,BLM,OOT,BDST,C17}. There are
a number of inequivalent concepts of fractional derivatives in use, which are presently the object of a wide-ranging
study. One such concept is the Caputo-derivative, which for a given number $\alpha \in (0,1)$ and a suitable function
$f$ is defined by
$$
\frac{\D^\alpha f(t)}{\D t^\alpha}=\frac{1}{\Gamma(1-\alpha)}\int_0^t \frac{1}{(t-s)^{\alpha}}\frac{\D f(s)}{\D s}\,\D{s},
$$
where $\Gamma$ is the usual Gamma-function. Equations with Caputo-derivatives occur, for instance, in the context of
anomalous transport theory, arising as scaling limits of continuous-time random walk (CTRW) models. Fractional diffusion
models described by the equation
$$
\partial_t^\alpha p(t,x) = D(-\Delta)^{\nu/2} p(t,x),
$$
with Caputo time-derivatives of the order $\alpha \in (0,1)$ and fractional Laplacians in space variables of index $\nu
\in (0,2)$, and with diffusion constant $D > 0$, have been much studied in the literature; for a discussion see \cite{MS}
and the references therein. This equation is the continuum limit of a CTRW model in which the random walker makes jumps
$z \in \Rd$ whose probability distribution has a tail proportional to $|z|^{-\nu/2}$, separated by random waiting times
$T$ with tail distributions proportional to $T^{-\alpha}$, leading to a non-Gaussian behaviour of the limit process. This,
in particular, captures more realistically the empirically observed anomalous spread of contaminants in groundwater flow
through a porous soil, see e.g. \cite{G10}, which is just one of the multiple applications of such equations.

In the present paper our goal is to consider a whole class of integro-differential equations of the above type, in which
we maintain Caputo time-derivative but allow many other choices of non-local operators in the space variable. Let $\cD
\subset \Rd$ be a bounded domain, and $F: (0, \infty)\times \cD \to \RR$ and $V, \varphi_0: \Rd \to \RR$ be given functions,
subject to conditions to be specified below. Also, let $\Psi$ be a so called Bernstein function (see Section 2 for the
details), which we will use to define non-local (pseudo-differential) operators of the form $\Psidel$, where $\Delta$ is
the usual Laplacian. A specific choice is not only the usual or the fractional Laplacian above, but many others of interest
such as their sum (describing jump-diffusion), or relativistic, geometric etc type of stable operators covering applications
in relativistic quantum theory, laser physics etc (for a discussion see \cite{KL16}). With these ingredients, in this paper
our interest is to establish maximum principles for the solutions of integro-differential equations of the form
\begin{equation}\label{E1.1}
   \left\{\begin{aligned}
     \partial^\alpha_t \varphi + \Psidel\varphi + V \varphi \; &= \; F(t, x) \quad \text{in} \;\, (0, \infty)\times \cD  \\
     \varphi(t, x) \; &= \; 0 \quad\qquad\;\, \text{in} \;\, (0, \infty)\times \cD^c  \\
     \varphi(0, x) \; & = \; \varphi_0(x).
   \end{aligned}\right.
\end{equation}

Maximum principles and related Alexandrov-Bakelman-Pucci (ABP) estimates are results of fundamental relevance in the analysis
of PDE or, more recently, integro-differential equations. In \cite{BL17b} we have recently obtained so-called refined and
weak maximum principles, anti-maximum principles, ABP estimates, Liouville-type theorems and related results for elliptic
non-local equations of the type as the space-dependent part in \eqref{E1.1}, as well as a parabolic ABP estimate for the
sub-solutions for the case when time dependence entered via usual derivatives corresponding to $\alpha = 1$ above. (We also
refer to the introduction of this paper for a review of the state of the art in the PDE literature.) In particular, the latter
has the form
$$
\sup_{[0, T)\times \cD} \varphi^+
\leq
\left(\sup_{[0, T)\times\cD^c} \varphi^+ \vee \sup_{\{T\}\times\cD} \varphi^+\right)+
C \left(1 + \frac{1}{\left(\Psi([\diam \cD]^{-2})\right)^\frac{2}{p'}}\right) \norm{F}_{p, Q_T},
$$
for (the positive part of) a bounded weak sub-solution $\varphi$ of \eqref{E1.1} with $\alpha = 1$, where $\cD$ is assumed to
have a regularity property, $\Psi$ satisfies a one-sided weak scaling condition with parameter $\underline{\mu}$ (see stated
precisely in Assumption \ref{WLSC} below), $F\in L^p([0, T) \times \cD)$, $C = C(p, d, \Psi)$ is a constant, $p, p'$ are
H\"older-conjugate exponents, and where $p > \frac{d}{2{\underline\mu}} + 1$.

For time-fractional equations maximum principles have been considered in \cite{L09,LRY,L17,LY}, however, only for the
cases when instead of a non-local operator $\Psidel$ the Laplacian or a second order elliptic differential operator in
divergence form is used. Further developing our approach proposed in \cite{BL17b} to time-fractional equations, in this
paper we consider also non-local spatial dependence, going well beyond the results established by these authors. A counterpart
of the ABP estimate for the time-fractional case leads us to
$$
p > \frac{d}{2{\underline\mu}} + \frac{1}{\alpha},
$$
which gives then an explanation of the bound on $p$ above, with a neat separation of space-time contributions.

The remainder of this paper is organized as follows. In Section 2 we briefly discuss Bernstein functions and subordinate
Brownian motion, which are our main tools in describing the spatial part in the equations. In our main Section 3 first we
show existence and uniqueness of weak solutions of the Cauchy problem (Theorem \ref{exuni}), and then derive a probabilistic
representation of the solution in terms of expectations over time-changed Brownian motion (Theorem \ref{T2.2}). Next we
prove a strong maximum principle (Theorem \ref{T2.3}) for any bounded domain with regular boundary. In Theorem \ref{stab}
we present a result on the stability of solutions under varying the data. Then we derive an Alexandrov-Bakelman-Pucci type
estimate for the solutions under a one-sided weak scaling property on the Bernstein functions used (Theorem \ref{T2.4}).
Finally, we present an application to the inverse source problem (Theorem \ref{inver}), which has also a practical relevance.

\section{\textbf{Bernstein functions of the Laplacian and subordinate Brownian motions}}
Consider the set of non-negative, completely monotone functions
$$
\mathcal B = \left\{f \in \cC^\infty((0,\infty)): \, f \geq 0 \;\; \mbox{and} \:\; (-1)^n\frac{\D^n f}{\D x^n} \leq 0,
\; \mbox{for all $n \in \mathbb N$}\right\}
$$
and its subset
$$
{\mathcal B}_0 = \left\{f \in \mathcal B: \, \lim_{u\downarrow 0} f(u) = 0 \right\}.
$$
An element of $\mathcal B$ is called a Bernstein function, in particular, they are non-negative, increasing, and concave.
Let $\mathcal M$ be the set of Borel measures $\mathfrak m$ on $\RR \setminus \{0\}$ with the property that
$$
\mathfrak m((-\infty,0)) = 0 \quad \mbox{and} \quad \int_{\RR\setminus\{0\}} (y \wedge 1) \mathfrak m(dy) < \infty.
$$
It is known that Bernstein functions $\Psi \in {\mathcal B}_0$ can be represented in the form
$$
\Psi(u) = bu + \int_{(0,\infty)} (1 - e^{-yu}) \mathfrak m(\D{y})
$$
with $b \geq 0$, and the map $[0,\infty) \times \mathcal M \ni (b,\mathfrak m) \mapsto \Psi \in {\mathcal B}_0$ is
bijective.

\begin{example}\label{Eg2.1}
Some key examples of $\Psi$ include:
\begin{itemize}
\item[(i)]
$\Psi(u)=u^{\nu/2}, \, \nu\in(0, 2]$
\vspace{0.1cm}
\item[(ii)]
$\Psi(u)=(u+m^{2/\nu})^{\nu/2}-m$, $\nu\in (0, 2)$, $m> 0$
\vspace{0.1cm}
\item[(iii)]
$\Psi(u)=u^{\nu/2} + u^{\tilde \nu/2}, \, \nu, \tilde\nu \in(0, 2]$
\vspace{0.1cm}
\item[(iv)]
$\Psi(u)=u^{\nu/2}(\log(1+u))^{-\tilde\nu/2}$, $\nu \in (0,2]$, $\tilde\nu \in [0,\nu)$
\vspace{0.1cm}
\item[(v)]
$\Psi(u)=u^{\nu/2}(\log(1+u))^{\tilde\nu/2}$, $\nu \in (0,2)$, $\tilde\nu \in (0, 2-\nu)$.
\end{itemize}
\end{example}

Bernstein functions give the Laplace exponents of subordinators. Recall that an $\RR^+$-valued L\'evy process $\pro S$
on a probability space $(\Omega_S, {\mathcal F}_S, \mathbb P_S)$ is called a subordinator whenever $\mathbb P_S (S_s
\leq S_t) = 1$ for $s \leq t$. There is a bijection between the set of subordinators on a given probability space and
Bernstein functions in ${\mathcal B}_0$, and the relationship
\begin{equation*}
\label{lapla}
\ex_{\mathbb P_S} [e^{-uS^\Psi_t}] = e^{-t\Psi(u)}, \quad u, t \geq 0,
\end{equation*}
holds, where $\Psi \in {\mathcal B}_0$, and where we have written $\pro {S^\Psi}$ for the unique subordinator associated
with Bernstein function $\Psi$. Corresponding to the examples of Bernstein functions above, the related processes are (i)
$\nu/2$-stable subordinator, (ii) relativistic $\nu/2$-stable subordinator, (iii) mixtures of independent subordinators
of different indices etc.

Let $\pro B$ be $\Rd$-valued Brownian motion on Wiener space $(\Omega_W,{\mathcal F}_W, \mathbb P_W)$, running twice at
its normal speed so that it has variance $\ex_{\Prob_W} [B_t^2] = 2t$, $t\geq 0$. Also, let $\pro {S^\Psi}$ be an
independent subordinator. The random process
$$
\Omega_W \times \Omega_S \ni (\omega,\varpi) \mapsto B_{S^\Psi_t(\varpi)}(\omega) \in \Rd
$$
is called subordinate Brownian motion under $\pro {S^\Psi}$, and
$$
\ex^x_{\mathbb P_S \times \mathbb P_W} [e^{iu \cdot B_{S^\Psi_t}}] = e^{t\Psi(|u|^2)}, \quad t > 0, \, u \in \Rd,
$$
holds. Except for the trivial case generated by $\Psi(u) = u$ giving Brownian motion, every subordinate Brownian motion
is a jump L\'evy process, satisfying the strong Markov property. For simplicity, we will denote a subordinate Brownian
motion by $\pro X$, its probability measure for the process starting at $x \in \Rd$ by $\mathbb P^x$, and expectation
with respect to this measure by $\ex^x$. The subordination procedure gives, in particular, the expression
\begin{equation*}
\label{subord}
\Prob (X_t \in A) = \int_0^\infty \Prob_W(B_s \in A)\Prob_S(S_t \in \D s),
\end{equation*}
for every measurable set $A$. For a detailed discussion of Bernstein functions, subordinators and subordinate Brownian
motion we refer to \cite{SSV}.

By subordination it also follows that the infinitesimal generator of subordinate Brownian motion with a subordinator 
corresponding to a Bernstein function $\Psi \in \cB_0$ is the pseudo-differential operator $-\Psi(-\Delta)$ defined by 
the Fourier multiplier
$$
\widehat{(\Psi(-\Delta) f)}(y) = \Psi(|y|^2)\widehat f(y), \quad y \in \Rd, \; f \in  \Dom(\Psi(-\Delta)),
$$
with domain $\Dom(\Psi(-\Delta))=\big\{f \in L^2(\Rd): \Psi(|\cdot|^2) \widehat f \in L^2(\Rd) \big\}$. By general
arguments it can be seen that $-\Psi(-\Delta)$ is a negative, self-adjoint operator with core $C_{\rm c}^\infty(\Rd)$.

In what follows, we will use the Hartman-Wintner condition
\begin{equation}\label{HW}
\lim_{|u|\to \infty}\frac{\Psi(\abs{u}^2)}{\log\abs{u}}=\infty.
\end{equation}
It is known that under this condition the subordinate Brownian motion $\pro X$ has a bounded continuous transition
probability density $q_t(x, y)=q_t(x-y)$, and $q_t(\cdot)$ is radially decreasing, see \cite{KS13}.

\section{\textbf{Maximum principles}}
\subsection{Weak solution and stochastic representation}
Let $\cD$ be a bounded connected domain in $\Rd$, $V\in \cC(\bar\cD)$, and consider the first exit time
\begin{equation*}
\label{exit}
\uptau_\cD = \inf\{t > 0: \, X_t \not\in \cD\}
\end{equation*}
of subordinate Brownian motion $\pro X$ from $\cD$. The killed Feynman-Kac semigroup in $\cD$ is given by
\begin{equation}
\label{killedV}
T^{\cD,V}_t f(x) = \ex^x[e^{-\int_0^t V(X_s)ds}f(X_t)\Ind_{\{\uptau_D > t\}}], \quad x \in \cD, \, t > 0, \, f\in L^2(\cD).
\end{equation}
It is shown in \cite[Lem.~3.1]{BL17a} that under condition \eqref{HW} $T_t^{\cD,V}$ is a Hilbert-Schmidt operator on $L^2(\cD)$
for every $t > 0$. Also, it is a strongly continuous semigroup in $L^2(\cD)$ with generator $-H^{\cD, V}$, where
$$
H^{\cD, V} =\Psidel + V,
$$
defined in form sense. This implies that the spectrum of $H^{\cD, V}$ is purely discrete, and there exists a countable
set $\seq \lambda$ of eigenvalues of finite multiplicity each, and corresponding square integrable eigenfunctions $\seq
\varphi$ of $H^{\cD, V}$, which form an orthonormal basis in $L^2(\cD)$. Furthermore, the eigenfunctions are bounded
continuous functions in $\cD$. Assuming that $V\geq 0$, it follows that
$$0
<\lambda_1<\lambda_2\leq \ldots
$$
Every $T^{\cD, V}_t$ is an integral operator, with integral kernel given by the eigenfunction expansion
$$
T^{\cD, V}(t, x, y)=\sum_{n=1}^\infty e^{-\lambda_n t} \varphi_n(x)\varphi_n(y), \quad t>0, \quad x, y\in\cD.
$$
These properties hold for the $V \equiv 0$ case as well, and we will denote $H^{\cD, 0} = H^\cD$. For further details
on general facts we refer to \cite[Ch.~XIX, Th.~6.2, Cor.~6.3]{GGK}.

Let $\{\lambda_{1}^0<\lambda_{2}^0\leq \ldots\}$ be the eigenvalues of the operator $H^{\cD}= \Psidel$. Since
$V\geq 0$, we can see by the min-max principle that
\begin{equation}
\label{compi}
\lambda_{n}\geq \lambda_n^0 -\norm{V}_\infty, \quad \; n\in\NN.
\end{equation}
Indeed, note that for any $f\in L^2(\cD)$
\begin{align}\label{Add1}
&\int_\cD \left(\frac{1}{t}\ex^x\left[(e^{-\int_0^t V(X_s)\, \D{s}}-1)f(X_t)\Ind_{\{\uptau_\cD>t\}}\right]-
\ex^x\left[V(X_t)f(X_t)\Ind_{\{\uptau_\cD>t\}}\right]\right)^2
\D{x}\nonumber
\\
& \quad =
\int_\cD \ex^x\left[\frac{1}{t}(e^{-\int_0^t V(X_s)\, \D{s}}-1-t V(X_t))f(X_t)\Ind_{\{\uptau_\cD>t\}}\right]^2\D{x}\nonumber
\\
&\quad\leq
\int_\cD \ex^x\left[f^2(X_t)\Ind_{\{\uptau_\cD>t\}}\right]\ex^x[\Xi^2_t]\D{x},
\end{align}
where
$$
\Xi_t=\frac{1}{t}\left(e^{-\int_0^t V(X_s)\, \D{s}}-1-t V(X_t)\right).
$$
Let $\varepsilon>0$ be arbitrary, and take a compactly supported continuous function $\psi\in L^2(\cD)$ such that
$\int_{\cD}\abs{f^2(y)-\psi^2(y)}\, \D{y}<\varepsilon$. Note that $\Xi_t$ is bounded uniformly in $t>0$ almost surely, and
$\ex^x[\Xi_t]\to 0$ as $t\to 0$ for all $x\in\cD$. Therefore, by the dominated convergence theorem we have
$$
\int_\cD \ex^x\left[\psi^2(X_t)\Ind_{\{\uptau_\cD>t\}}\right]\ex^x[\Xi^2_t]\D{x}\to 0, \quad \text{as}\quad t\to 0.
$$
On the other hand
\begin{align*}
\int_\cD \ex^x\left[\left(f^2(X_t)-\psi^2(X_t)\right)\Ind_{\{\uptau_\cD>t\}}\right]\ex^x[\Xi^2_t]\D{x}
& \leq \kappa\, \int_\cD \left[\int_{\cD}\abs{f^2(y)-\psi^2(y)} q_t(x-y)\, \D{y}\right]\D{x}
\\
&\leq \kappa \int_{\cD}\abs{f^2(y)-\psi^2(y)} \D{y}=\kappa\varepsilon,
\end{align*}
with a constant $\kappa$ dependent on $\norm{V}_\infty$, where $q_t$ is the transition density of $\pro{X}$ at time $t$.
Hence \eqref{Add1} tends to zero as $t\to 0$. Clearly, this implies
$$
\frac{1}{t}\ex^x\left[(e^{-\int_0^t V(X_s)\, \D{s}}-1)f(X_t)\Ind_{\{\uptau_\cD>t\}}\right]\to V(x) f(x)
\quad \text{as}\quad t\to 0,
$$
in $L^2(\cD)$. Hence $\Dom(H^{\cD, V})=\Dom(H^{\cD})$, and for every $f\in \Dom(H^{\cD})$ we have
$$
H^{\cD, V} f=H^{\cD} f + V f.
$$
Let now $L_n$ be any $n$-dimensional subspace of $\Dom (H^{\cD, V})=\Dom(H^\cD)$. By the min-max representation of the
eigenvalues it the follows that
\begin{align*}
\lambda_n & = \inf_{L_n}\, \sup\left\{\langle H^{\cD, V}f, f\rangle\; :\; f\in L_n \; \text{and}\; \norm{f}_{L^2(\cD)}=1\right\}
\\
&=
\inf_{L_n}\,\sup\left\{\langle H^{\cD}f, f\rangle + \langle f V, f\rangle\; :\; f\in L_n \; \text{and}\; \norm{f}_{L^2(\cD)}=1\right\}
\\
&\geq
\inf_{L_n}\, \sup\left\{\langle H^{\cD}f, f\rangle  :\; f\in L_n \; \text{and}\; \norm{f}_{L^2(\cD)}=1\right\}-\norm{V}_\infty
\\
&=
\lambda^0_n -\norm{V}_\infty,
\end{align*}
which shows \eqref{compi}.

\medskip

To define a mild or weak solution of the Cauchy problem \eqref{E1.1} we will use the operators
$$
S_t\psi = \sum_{n\geq 1} E_{\alpha, 1}(-\lambda_n t^\alpha) \langle \varphi_n, \psi\rangle \varphi_n,
$$
and
$$
K_t\psi = \sum_{n\geq 1} t^{\alpha-1} E_{\alpha, \alpha}(-\lambda_n t^\alpha) \langle \varphi_n, \psi\rangle \varphi_n,
$$
where the pointed brackets mean scalar product in $L^2(\cD)$ and
$$
E_{\alpha, \beta}(z) = \sum_{n\geq 0}\frac{z^n}{\Gamma(\alpha k+ \beta)}, \quad z\in\mathbb{C},
$$
denotes the Mittag-Leffler function (see \cite{GKMR}). Recall from \cite[p.~35]{P99} that for a suitable constant $C =
C(\alpha, \beta)$, the estimate
\begin{equation}\label{Add4}
\abs{E_{\alpha, \beta}(-s)}\leq \frac{C}{1+s}, \quad s\geq 0,
\end{equation}
holds for $\alpha\in(0, 2)$ and $\beta \in \RR$. Thus we have for every $t\geq 0$ that
$$
\left(\sup_{n\in \NN} \abs{E_{\alpha, 1}(-\lambda_n t^\alpha)}\right) \vee
\left(\sup_{n\in \NN} t^{\alpha-1} \abs{E_{\alpha, \alpha}(-\lambda_n t^\alpha)}\right)<\infty,
$$
ensuring that $S_t$ and $K_t$ are well-defined linear operators on $L^2(\cD)$ for all $t>0$.

To study the existence of a weak solution, we define powers of the operator $H^{\cD, V}$. For
$\gamma\in\RR$ let
$$
H^{\cD, V}_\gamma \psi =\sum_{n=1}^\infty \lambda_n^\gamma \langle \varphi_n, \psi\rangle \varphi_n,
\quad \psi\in \Dom(H^{\cD, V}_\gamma)
$$
with
$$
\Dom(H^{\cD, V}_\gamma)= \left\{\psi\in L^2(\cD)\; :\; \sum_{n=1}^\infty \lambda_n^{2\gamma}
\langle \varphi_n, \psi\rangle^2 < \infty \right\} := \sH_\gamma.
$$
Note that $\sH_\gamma$ is a Hilbert space with norm
$$
\norm{\psi}_{\sH_\gamma}= \left(\sum_{n=1}^\infty \lambda_n^{2\gamma}\langle \varphi_n, \psi\rangle^2\right)^{\nicefrac{1}{2}}.
$$
We also have for $\gamma>0$ that
$$
\Dom(H^{\cD, V}_\gamma)\subset L^2(\cD)\simeq (L^2(\cD))^\prime\subset \left(\Dom(H^{\cD, V}_\gamma)\right)^\prime.
$$
For notational economy we set $\left(\Dom(H^{\cD, V}_\gamma)\right)^\prime=\Dom(H^{\cD, V}_{-\gamma}) =:\sH_{-\gamma}$.
For every $f\in \sH_{-\gamma}$ we denote its action on $\psi\in\cH_{\gamma}$ by $_{-\gamma}\langle f, \psi\rangle_{\gamma}$.
The space $\sH_{-\gamma}$ is again a Hilbert space with norm
$$
\norm{f}_{\sH_{-\gamma}}=\left(\sum_{n=1}^\infty \lambda_n^{-2\gamma}\,
_{-\gamma}\langle \varphi_n, f\rangle_\gamma^2\right)^{\nicefrac{1}{2}}.
$$

Now we define weak solutions in the spirit of \cite{SY11}. Throughout this paper we use Caputo-time derivative as defined
in the Introduction.
\begin{definition}\label{D3.1}
We say that $\varphi$ is a weak solution of \eqref{E1.1} if
\begin{enumerate}
\item[(1)]
$\varphi(t, \cdot)\in \Dom(H^{\cD, V})$ for almost every $t\in (0, T)$, and $\partial^\alpha\varphi\in L^2_{\rm loc}((0, T), L^2(\cD))$,
\vspace{0.1cm}
\item[(2)]
the equality
\begin{equation}\label{ED2.1A}
\partial^\alpha_t \varphi + \Psidel\varphi + V\varphi = F(t, \cdot)
\end{equation}
holds in $L^2(\cD)$ for almost every $t\in (0, T)$,
\vspace{0.1cm}
\item[(3)]
for a $\gamma>0$ we have $\varphi\in \cC([0, T]; \sH_{-\gamma})$ with
$$
\lim_{t\to 0}\norm{\varphi(t, \cdot)-\varphi_0}_{{\sH}_{-\gamma}}=0\,.
$$
\end{enumerate}
\end{definition}

Our first goal in this section is to establish existence of a weak solution for \eqref{E1.1}. We will use the following assumption
on the Bernstein function.
\begin{assumption}\label{Ass2.1}
There exists $\beta\in (0, 1]$ such that
$$
\liminf_{\abs{u}\to\infty}\, \frac{\Psi(u)}{u^\beta}>0\,.
$$
\end{assumption}

\begin{theorem}[\textbf{Existence and uniqueness of weak solution}]
\label{exuni}
Let Assumption~\ref{Ass2.1} hold and $\cD$ be a bounded domain in $\Rd$ satisfying the exterior cone condition. If
$F\in L^\infty((0, T); L^2(\cD))$ and $\varphi_0\in L^2(\cD)$, then
\begin{equation}\label{E1.2}
\varphi(t, x) = S_t \varphi_0(x) + \int_0^t K_{t-s} F(s, x)\, \D{s}, \quad x \in \cD, \; t>0,
\end{equation}
is the unique weak solution of \eqref{E1.1} such that $\varphi\in L^2((0, T); \sH_1)$ and $\partial^\alpha_t\varphi
\in L^2((0, T)\times \cD)$. Moreover, we have
$$
\lim_{t\to 0}\norm{\varphi(t, \cdot)-\varphi_0}_{{\sH}_{-\gamma}}=0,
$$
for every $\gamma>\frac{d}{4\beta}-1$.
\end{theorem}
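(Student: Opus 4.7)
I would follow the spectral approach in the spirit of \cite{SY11}, extended here to the non-local setting. Expanding $\varphi_0=\sum_n a_n\varphi_n$ and $F(t,\cdot)=\sum_n F_n(t)\varphi_n$ with $a_n=\langle\varphi_n,\varphi_0\rangle$ and $F_n(t)=\langle\varphi_n,F(t,\cdot)\rangle$, and setting $c_n(t)=\langle\varphi_n,\varphi(t,\cdot)\rangle$, projecting \eqref{ED2.1A} onto $\varphi_n$ formally reduces the PDE to the scalar Caputo Cauchy problems
\begin{equation*}
\partial^\alpha_t c_n(t)+\lambda_n c_n(t)=F_n(t), \quad c_n(0)=a_n,
\end{equation*}
whose unique solution via Mittag-Leffler calculus is $c_n(t)=E_{\alpha,1}(-\lambda_n t^\alpha)a_n+\int_0^t(t-s)^{\alpha-1}E_{\alpha,\alpha}(-\lambda_n(t-s)^\alpha)F_n(s)\D s$. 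Re-assembling $\varphi(t,x)=\sum_n c_n(t)\varphi_n(x)$ recovers precisely the candidate \eqref{E1.2}, so the task is to justify these manipulations by verifying (a) the claimed regularity, (b) the $\sH_{-\gamma}$-continuity at $t=0$, and (c) uniqueness.

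\textbf{Regularity of the candidate.} The main tool is the Mittag-Leffler bound \eqref{Add4} combined with the identity
\begin{equation*}
\int_0^t(t-s)^{\alpha-1}E_{\alpha,\alpha}(-\lambda_n(t-s)^\alpha)\D s=\lambda_n^{-1}\bigl(1-E_{\alpha,1}(-\lambda_n t^\alpha)\bigr)\leq\lambda_n^{-1},
\end{equation*}
which shows that the convolution kernel $t^{\alpha-1}E_{\alpha,\alpha}(-\lambda_n t^\alpha)$ has $L^1(0,T)$-norm at most $\lambda_n^{-1}$; Young's inequality in $t$ then delivers the $F$-contribution to $\varphi\in L^2((0,T);\sH_1)$ with bound $\|F\|_{L^2((0,T);L^2)}^2$. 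The $S_t\varphi_0$ contribution is controlled by splitting the eigenindex sum according to whether $\lambda_n t^\alpha$ is small or large, using respectively $|E_{\alpha,1}|\leq C$ and $|E_{\alpha,1}(-\lambda_n t^\alpha)|\leq C(\lambda_n t^\alpha)^{-1}$, and closing via Parseval. Substituting into \eqref{ED2.1A} yields $\partial^\alpha_t\varphi\in L^2((0,T)\times\cD)$, and termwise Caputo-differentiation of the series is legitimate in $L^2(\cD)$ for a.e.\ $t$ by dominated convergence once the spatial smoothing is in hand.

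\textbf{Initial layer and uniqueness.} The threshold $\gamma>d/(4\beta)-1$ in (b) is precisely the summability threshold $\sum_n\lambda_n^{-2(\gamma+1)}<\infty$: combining \eqref{compi} with Weyl-type asymptotics $\lambda_n^0\gtrsim n^{2\beta/d}$ for the Dirichlet eigenvalues of $\Psidel$ on a bounded domain, which under Assumption~\ref{Ass2.1} follow from min-max comparison with the classical Weyl law for $(-\Delta)^\beta$, one obtains $\lambda_n\gtrsim n^{2\beta/d}$, so $\sum_n\lambda_n^{-s}<\infty$ iff $s>d/(2\beta)$. Decomposing $\varphi(t)-\varphi_0=(S_t\varphi_0-\varphi_0)+\int_0^t K_{t-s}F(s)\D s$, the first summand tends to zero in $\sH_{-\gamma}$ for any $\gamma\geq 0$ by dominated convergence applied to $\sum_n\lambda_n^{-2\gamma}(E_{\alpha,1}(-\lambda_n t^\alpha)-1)^2 a_n^2$. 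For the second summand, weighted Cauchy-Schwarz against the measure $(t-s)^{\alpha-1}E_{\alpha,\alpha}(-\lambda_n(t-s)^\alpha)\D s$, combined with the $\lambda_n^{-1}$ identity, reduces the bound to a constant times $\sum_n\lambda_n^{-2(\gamma+1)}\int_0^t F_n(s)^2\D s$, which vanishes as $t\to 0$ precisely under the stated threshold. For uniqueness (c), any weak solution $\psi$ produces scalar coefficients $\langle\varphi_n,\psi(t,\cdot)\rangle$ satisfying the same scalar Caputo problem as $c_n$ with identical initial data (the pairing with $\varphi_n\in\sH_\gamma$ being legitimate by Definition~\ref{D3.1}(3)), so Laplace-transform uniqueness for the scalar ODE forces $\psi=\varphi$.

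\textbf{Main obstacle.} I expect the principal technical challenge to lie in two places. First, the sharp eigenvalue asymptotics $\lambda_n^0\gtrsim n^{2\beta/d}$ need to be extracted for $\Psi(-\Delta)$ on a merely exterior-cone regular domain from Assumption~\ref{Ass2.1}; here \eqref{HW} is used to ensure continuous bounded transition densities and thus bounded continuous eigenfunctions, so that termwise pairings genuinely make sense. Second, justifying termwise Caputo-differentiation of the spectral series requires delicate dominated convergence arguments in view of the endpoint singularity $t^{\alpha-1}$ of the Mittag-Leffler kernel; this is exactly where the interplay between eigenvalue growth and Mittag-Leffler decay must be exploited most carefully, and where the dimensional threshold $\gamma>d/(4\beta)-1$ emerges naturally from the summability of $\lambda_n^{-2(\gamma+1)}$.
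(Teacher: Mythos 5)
Your strategy is essentially the paper's: diagonalize via the eigenbasis of $H^{\cD,V}$, reduce each mode to a scalar Caputo problem solved by Mittag-Leffler functions, and then justify the formal series by the bound \eqref{Add4} and eigenvalue growth, exactly in the spirit of \cite{SY11}. The regularity statement, the termwise Caputo identity \eqref{ET2.1A}, and the threshold $\gamma>\frac{d}{4\beta}-1$ arising from $\sum_n\lambda_n^{-(2\gamma+2)}<\infty$ all match the paper's Step 1/Step 2 decomposition. The paper refers back to \cite{SY11} for the $L^2$-continuity at $t=0$ of the homogeneous part, for the a priori bound on $\partial_t^\alpha\varphi$ and $H^{\cD,V}\varphi$, and for uniqueness, which you re-derive by hand; these details are consistent with what you write.

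The one genuine difference is the mechanism you propose for the eigenvalue lower bound $\lambda_n^0\gtrsim n^{2\beta/d}$. The paper invokes Chen and Song's two-sided spectral comparison \cite[Th.~4.5]{Chen-Song}, namely $\lambda_n^0\geq\delta\,\Psi(\lambda_{n,\mathrm{Lap}})$, valid under the exterior cone condition on $\cD$ (this is where that hypothesis enters), and then combines it with the Courant--Hilbert Weyl law $\lambda_{n,\mathrm{Lap}}\gtrsim n^{2/d}$, Assumption~\ref{Ass2.1}, and \eqref{compi}. You instead propose a direct min-max comparison of the Dirichlet form of $\Psidel$ with that of $(-\Delta)^\beta$. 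As stated this is too quick: Assumption~\ref{Ass2.1} only gives $\Psi(u)\geq c\,u^\beta$ for $u$ large (say $u\geq u_0$), so the forms do not compare pointwise on the full Fourier side. The fix is the standard one — split $\int\Psi(|\xi|^2)|\hat f|^2\,\D\xi\geq c\int_{|\xi|^2\geq u_0}|\xi|^{2\beta}|\hat f|^2\,\D\xi\geq cQ_\beta[f]-c\,u_0^\beta\norm{f}^2_{L^2}$, whence $\lambda_n^0\geq c\,\lambda_n^{(\beta)}-c\,u_0^\beta$, with $\lambda_n^{(\beta)}$ the $n$-th Dirichlet eigenvalue of $(-\Delta)^\beta$ — but you should state it, as your one-line justification as written would be rejected. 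Interestingly, this alternative route only uses the Li--Yau/Weyl lower bound for the fractional Dirichlet Laplacian on a bounded open set, which needs no boundary regularity, so it would sidestep the exterior cone condition that the paper uses solely to invoke Chen--Song; in that sense your argument, once filled in, is slightly more general. A second, minor discrepancy: your weighted Cauchy--Schwarz estimate for the inhomogeneous term yields a bound of the form $\sum_n\lambda_n^{-2\gamma-1}\int_0^t k_n(t-s)|F_n(s)|^2\D s$ with $k_n(r)=r^{\alpha-1}E_{\alpha,\alpha}(-\lambda_n r^\alpha)$; to extract the factor $\lambda_n^{-1}$ and hence the $\lambda_n^{-(2\gamma+2)}$ summability you must, as the paper does, bound $|F_n(s)|$ by $\norm{F}_{L^\infty((0,T);L^2(\cD))}$ and then use the $L^1$-mass identity of $k_n$ (this is exactly \eqref{ET2.1C}). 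The expression $\sum_n\lambda_n^{-2(\gamma+1)}\int_0^t F_n(s)^2\D s$ you write down does not quite follow from the Cauchy--Schwarz step and would in fact trivialize (and hence wrongly eliminate) the threshold on $\gamma$, since it is dominated by $\lambda_1^{-2(\gamma+1)}\int_0^t\norm{F(s,\cdot)}_{L^2}^2\D s$ for any $\gamma\geq0$. So the summability of $\lambda_n^{-(2\gamma+2)}$ must be genuinely used, as in the paper.
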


\begin{proof}
The arguments below are inspired by \cite{SY11} and some details are left to the reader to avoid repetitions.

\vspace{0.1cm}
\noindent
\emph{Step 1}: First consider $F=0$. In this case we have
$$
\varphi(t, x) = \sum_{n=1}^\infty \langle \varphi_n, \varphi_0\rangle E_{\alpha, 1}(-\lambda_n t^\alpha)\varphi_n(x).
$$
By similar arguments as in \cite[Th.~2.1(i)]{SY11} we obtain
$$
\lim_{t\to 0}\norm{\varphi(t, \cdot)-\varphi}_{L^2(\cD)}=0.
$$

\medskip
\noindent
\emph{Step 2}: Next consider $\varphi_0=0$.  In this case we have
$$
\varphi(t, x) = \sum_{n=1}^\infty \left[\int_0^t  (t-s)^{\alpha-1} E_{\alpha, \alpha}(-\lambda_n (t-s)^\alpha)
\langle \varphi_n, F(s, \cdot)\rangle\, \D{s}\right] \varphi_n(x).
$$
As observed in \cite{SY11}, it follows that
\begin{align}\label{ET2.1A}
&\partial^\alpha_t \left[\int_0^t  (t-s)^{\alpha-1} E_{\alpha, \alpha}(-\lambda_n (t-s)^\alpha)
\langle \varphi_n, F(s, \cdot)\rangle\, \D{s}\right]\nonumber
\\
&\quad =
-\lambda_n \left[\int_0^t  (t-s)^{\alpha-1} E_{\alpha, \alpha}(-\lambda_n (t-s)^\alpha)
\langle \varphi_n, F(s, \cdot)\rangle\, \D{s}\right] + \langle F(t, \cdot), \varphi_n\rangle.
\end{align}
The arguments in \cite[Th.~2.2]{SY11} also give that
$$
\norm{\partial^\alpha_t \varphi}^2_{L^2((0, T)\times\cD)} + \norm{H^{\cD, V}\varphi}_{L^2((0, T)\times\cD)}
\;\leq \kappa \norm{F}_{L^2((0, T)\times\cD)},
$$
with a constant $\kappa$, independently of $F$. Hence, applying \eqref{ET2.1A} we see that \eqref{ED2.1A} holds.
Next we show that
\begin{equation}\label{ET2.1B}
\lim_{t\to 0}\norm{\varphi(t, \cdot)}_{\sH_{-\gamma}}=0.
\end{equation}
By a similar type of calculation as in \cite[p.~434]{SY11} we obtain
\begin{equation}\label{ET2.1C}
\norm{\varphi(t, \cdot)}_{\sH_{-\gamma}}\leq \kappa_1 \norm{F}_{L^\infty((0, T); L^2(\cD))}
\sum_{n=1}^\infty \frac{1}{\lambda_n^{2\gamma + 2}} (1-E_{\alpha, 1}(-\lambda_n t^\alpha)),
\end{equation}
for a constant $\kappa_1$. By the assumption on $\cD$ and \cite[Th.~4.5]{Chen-Song}, there exists $\delta\in(0,1)$
satisfying
$$
\lambda_n^0 \geq \delta \Psi(\lambda_{n, \mathrm{Lap}}),
$$
where $\lambda_{n, \mathrm{Lap}}$ denotes the eigenvalues of the Laplacian with Dirichlet boundary condition in $\cD$,
arranged in increasing order and including multiplicity. On the other hand, from \cite{CH} we have $\lambda_{n,\mathrm{Lap}}
\gtrsim n^{\nicefrac{2}{d}}$. Combining this with \eqref{compi} and \eqref{ET2.1C}, and using Assumption~\ref{Ass2.1}, we
find
$$
\norm{\varphi(t, \cdot)}_{\sH_{-\gamma}}\leq \kappa_2 \norm{F}_{L^\infty((0, T); L^2(\cD))}
\sum_{n=1}^\infty \frac{1}{n^{\frac{2\beta}{d}(2\gamma + 2)}} (1-E_{\alpha, 1}(-\lambda_n t^\alpha)),
$$
with a constant $\kappa_2$. By our assumption on $\gamma$ we have $\frac{2\beta}{d}(2\gamma + 2)>1$. Since
$1-E_{\alpha, 1}(-\lambda_n t^\alpha)$ is bounded, see \cite[Lem.~3.1]{SY11}, and tends to zero as $t\to 0$, we
readily obtain \eqref{ET2.1B}.

Finally, to show uniqueness a reasoning similar to \cite[pp.432-433]{SY11} can be used.
\end{proof}

\begin{remark}
It is easily seen from \eqref{E1.2} and \eqref{Add4} that for $F=0$ and $\varphi\in L^2(\cD)$ we obtain the decay
rate
$$
\norm{\varphi(t, \cdot)}_{L^2(\cD)}\leq \frac{C}{1+ \lambda_1\, t^\alpha}, \quad t>0,
$$
with a constant $C > 0$, which can be compared with \cite[Cor.~2.6]{SY11}.
\end{remark}

In the remaining part of this paper our main tool will be formula \eqref{E1.2} and its probabilistic representation
which we derive next. Let $\pro \xi$ be a stable subordinator with exponent $\alpha \in (0,1)$, and denote by $g_t$
its smooth transition probability density at time $t$. The function $g_t$ is bounded for every $t>0$, see \cite{K81}.
Moreover, by \cite[Th.~1.1]{W07} there exists a constant $c_1 > 0$ such that
\begin{equation}\label{gbound}
g_1(x)\leq  \frac{c_1}{(1+ x)^{1+\alpha}},\quad x > 0.
\end{equation}
By usual scaling it is also known that
$$
g_t(u) = t^{-\nicefrac{1}{\alpha}} g_1(t^{-\nicefrac{1}{\alpha}} u), \quad t>0\,.
$$
Let $\pro\eta$ be the inverse of $\pro\xi$, i.e., the process
$$
\eta_t =\inf \{ s > 0\; :\; \xi_s> t\}, \quad t > 0.
$$
It is known  \cite{MNV} that $\pro\eta$ has density
\begin{equation}\label{E2.8}
\eta_t(\D{u})= t \alpha^{-1} u^{-1-\nicefrac{1}{\alpha}} g_1(u^{-\nicefrac{1}{\alpha}}t)\, \D{u}, \quad t>0.
\end{equation}
We choose the process $\pro\eta$ to be independent of the subordinate Brownian motion $\pro X$, and continue to make
the assumption $V\geq 0$ without specifying it explicitly.

\begin{theorem}[\textbf{Probabilistic representation}]
\label{T2.2}
Let $\varphi$ satisfy \eqref{E1.2} with given $\varphi_0\in L^2(\cD)$ and $F\in L^2((0, T)\times \cD)$. Then we have
\begin{equation}\label{ET2.2A}
\varphi(t,x)= \int_0^\infty T^{\cD, V}_u\varphi_0(x)\, \eta_t(\D{u}) +
\int_0^t \left[\int_0^\infty  l^{-\nicefrac{1}{\alpha}} g_1(l^{-\nicefrac{1}{\alpha}}(t-s)) T_l^{\cD, V}F(s, x)\;
\D{l}\right]\D{s}\,,
\end{equation}
where $\semi {T^{\cD, V}}$ is the semigroup defined by \eqref{killedV}.
\end{theorem}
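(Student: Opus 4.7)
The proof proceeds by inserting, term by term in the eigenfunction expansion, two classical subordination identities that relate the Mittag--Leffler functions to the densities of the $\alpha$-stable subordinator $\pro\xi$ and its inverse $\pro\eta$: for every $\lambda\geq 0$ and $\tau>0$,
\begin{align*}
E_{\alpha,1}(-\lambda\tau^\alpha) &= \int_0^\infty e^{-\lambda u}\,\eta_\tau(\D u), \\
\tau^{\alpha-1}E_{\alpha,\alpha}(-\lambda\tau^\alpha) &= \int_0^\infty e^{-\lambda l}\,l^{-\nicefrac{1}{\alpha}}g_1(l^{-\nicefrac{1}{\alpha}}\tau)\,\D l.
\end{align*}
The first can be obtained from the moment formula $\ex[\eta_\tau^k]=k!\tau^{\alpha k}/\Gamma(\alpha k+1)$ (derivable from the scaling $\xi_u\Dst u^{\nicefrac{1}{\alpha}}\xi_1$ and the $\Gamma$-identity $\ex[\xi_1^{-s}]=\Gamma(s/\alpha+1)/\Gamma(s+1)$) combined with the power series defining $E_{\alpha,1}$. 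The second follows from the scaling $l^{-\nicefrac{1}{\alpha}}g_1(l^{-\nicefrac{1}{\alpha}}\tau)=g_l(\tau)$ by comparing Laplace transforms in $\tau$: both sides yield $(p^\alpha+\lambda)^{-1}$, via the standard Mittag--Leffler transform on the left and $\int_0^\infty e^{-p\tau}g_l(\tau)\,\D\tau=e^{-lp^\alpha}$ on the right.

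Given these identities, I substitute them into \eqref{E1.2}. Applying the first with $\lambda=\lambda_n$ to each eigenmode of $S_t\varphi_0$ and using the spectral representation $T^{\cD,V}_u\varphi_0=\sum_n e^{-\lambda_n u}\langle\varphi_n,\varphi_0\rangle\varphi_n$ of the killed Feynman--Kac semigroup gives
$$
S_t\varphi_0 \;=\; \sum_{n\geq 1} \Bigl(\int_0^\infty e^{-\lambda_n u}\,\eta_t(\D u)\Bigr)\langle\varphi_n,\varphi_0\rangle\varphi_n
\;=\; \int_0^\infty T^{\cD,V}_u\varphi_0\,\eta_t(\D u)
$$
after exchanging sum and integral. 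An analogous application of the second identity with $\tau=t-s$ to each eigenmode of $K_{t-s}F(s,\cdot)$ yields
$$
K_{t-s}F(s,\cdot)\;=\;\int_0^\infty l^{-\nicefrac{1}{\alpha}}g_1(l^{-\nicefrac{1}{\alpha}}(t-s))\,T^{\cD,V}_l F(s,\cdot)\,\D l,
$$
and integrating in $s\in(0,t)$ produces the second summand in \eqref{ET2.2A}.

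The main technical point is to justify the exchange of the eigenfunction summation with the $\eta_t$- and $g_l$-integrals, since the series converges only in $L^2(\cD)$ and the kernel $l^{-\nicefrac{1}{\alpha}}g_1(l^{-\nicefrac{1}{\alpha}}(t-s))$ is potentially singular at $l=0$. I would truncate to partial sums $S_t^N$, $K_{t-s}^N$ indexed by $n\leq N$, where the exchange is trivial by linearity, and then pass $N\to\infty$ in $L^2(\cD)$. By Parseval, $\norm{T^{\cD,V}_u\varphi_0-T^{\cD,V,N}_u\varphi_0}_{L^2(\cD)}^2=\sum_{n>N}e^{-2\lambda_n u}\abs{\langle\varphi_n,\varphi_0\rangle}^2$ is bounded uniformly in $u\geq 0$ by a sequence tending to zero, and similarly for $F(s,\cdot)$ in place of $\varphi_0$. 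The dominating bounds come from (i) the $L^2$-contractivity $\norm{T^{\cD,V}_u}_{L^2\to L^2}\leq 1$, valid since $V\geq 0$, (ii) the total mass $\int_0^\infty\eta_t(\D u)=1$, and (iii) the $\lambda=0$ case of the second identity, $\int_0^\infty l^{-\nicefrac{1}{\alpha}}g_1(l^{-\nicefrac{1}{\alpha}}(t-s))\,\D l=(t-s)^{\alpha-1}/\Gamma(\alpha)$, which is locally integrable in $s\in(0,t)$. Together with $\varphi_0\in L^2(\cD)$ and $F\in L^2((0,T)\times\cD)$, these estimates make the integrands Bochner-integrable and, via Minkowski, allow the $N\to\infty$ passage inside the integrals, completing the proof.
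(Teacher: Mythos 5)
Your proof is correct and follows essentially the same route as the paper: reduce to the two scalar subordination identities expressing $E_{\alpha,1}(-\lambda t^\alpha)$ and $t^{\alpha-1}E_{\alpha,\alpha}(-\lambda t^\alpha)$ as integrals against the inverse-stable density $\eta_t$ and the stable density $g_l$, substitute these term by term into the spectral expansions of $S_t$ and $K_t$, and justify the interchange of sum and integral via $L^2$-contractivity of $T_t^{\cD,V}$ and integrability of the kernels. The paper establishes both identities by computing and inverting the Laplace transform in $t$ of the truncated operator-valued series $K_t^m\psi$ and passing to the limit by density, whereas you derive the first identity via the moment formula for $\eta_t$ and state the second directly before summing over $n$; these are minor variants of one and the same argument.
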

\begin{proof}
First we show that
\begin{equation}\label{ET2.2B}
S_t\varphi_0(x)=\ex^x\left[ e^{-\int_0^{\eta_t} V(X_s)\, \D{s}} \varphi_0(X_{\eta_t}) \Ind_{\{\uptau_\cD>\eta_t\}}\right],
\end{equation}
which gives the first term at the right hand side of \eqref{ET2.2A}. Denote Laplace transform by $\cL$, and the
transition probability density of $\pro X$ at time $t>0$ by $q_t$. We use the fact that the Laplace transform of $E_{\alpha, 1}
(-\lambda t^\alpha)$ is $\frac{s^{\alpha-1}}{s^\alpha+\lambda}$. Also, for every $\psi \in L^2(\cD)$, extending it to $\Rd$
by zero, we have
\begin{align}\label{ET2.2C}
\norm{T^{\cD, V}_l\psi}^2_{L^2(\cD)} &\leq \int_{\cD} \left[\ex^x[\psi(X_l)\Ind_{\{\uptau_\cD> l\}}]\right]^2\,
\D{x}\nonumber
\\
&\leq  \int_{\cD} \ex^x[\psi^2 (X_l)]\Prob^x(\uptau_\cD>l) \D{x}\nonumber
\\
&\leq \left[\sup_{\cD} \Prob^x(\uptau_\cD>l)\right] \int_{\Rd} \int_{\Rd} \psi^2 (y)q_l(x, y)\D{y}\, \D{x}
\leq\norm{\psi}^2_{L^2(\cD)}.
\end{align}
Thus we have
\begin{align*}
\cL(S_\cdot \varphi_0)(s)
&=
\sum_{n=1}^\infty  \frac{s^{\alpha-1}}{s^\alpha+\lambda_n} \langle \varphi_n, \varphi_0\rangle \varphi_n,
\\
&=
\sum_{n=1}^\infty s^{\alpha-1}\left[\int_0^\infty e^{-(s^\alpha+\lambda_n)l}\, \D{l}\right]
\langle\varphi_n,\varphi_0\rangle \varphi_n,
\\
&=
\int_0^\infty s^{\alpha-1} e^{-s^\alpha\, l}
\left[\sum_{n=1}^\infty e^{-\lambda_n l}\langle \varphi_n, \varphi_0\rangle \varphi_n\right]\, \D{l}
\\
&=
\int_0^\infty  s^{\alpha-1} e^{-s^\alpha\, l} T^{\cD, V}_l\varphi_0(x)\, \D{l}.
\end{align*}
The above equality should be understood in $L^2(\cD)$, which is justified by \eqref{ET2.2C}.

Recall the equality
$$
s^{\alpha-1} e^{-s^\alpha\, l}=\frac{1}{\alpha}\int_0^\infty u e^{-s u} g_1(u l^{-\nicefrac{1}{\alpha}})
l^{-1-\nicefrac{1}{\alpha}}\, \D{u},
$$
see \cite{MBSB}. Combining it with the above gives
$$
\cL(S_\cdot \varphi_0)(s)=
\int_0^\infty e^{-s u}\left[\frac{1}{\alpha}\int_0^\infty u  g_1(u l^{-\nicefrac{1}{\alpha}})
l^{-1-\nicefrac{1}{\alpha}} T^{\cD, V}_l\psi_0(x)\, \ \D{l}\right]\, \D{u}
$$
Comparing the Laplace transforms on both sides and using \eqref{E2.8}, we obtain \eqref{ET2.2B}. We note
that a similar argument appeared in \cite{MNV} in another context.

Next we calculate the second term at the right hand side of \eqref{ET2.2A}. We will use that for $f_\lambda(t)=
t^{\alpha-1} E_{\alpha, \alpha}(-\lambda t^\alpha)$, $\lambda>0$, the expression
$$
\cL(f_\lambda)(s)= \frac{1}{s^\alpha+\lambda},\quad s>\lambda^{\nicefrac{1}{\alpha}},
$$
holds, see \cite[p.~312]{GKMR}. Let $\psi\in \cC_c(\cD)$, i.e., a continuous function with compact support. Define
$$
K^m_t\psi = \sum_{n= 1}^m t^{\alpha-1} E_{\alpha, \alpha}(-\lambda_n t^\alpha) \langle \varphi_n, \psi\rangle \varphi_n,
$$
and
$$
T_l^{\cD, V, m}\psi = \sum_{n= 1}^m e^{-\lambda_n l}\langle \varphi_n, \psi\rangle \varphi_n.
$$
Note that $K^m\psi$ and $T_l^{\cD, V, m}\psi$ are continuous functions in $(t, x)\in(0, \infty)\times\cD$. Thus for
every $s^\alpha> \max\{\lambda_1,\ldots, \lambda_m\}$ we have
\begin{align*}
\cL(K^m_\cdot\psi)(s) &=\sum_{n=1}^m \frac{1}{s^\alpha+\lambda_n} \langle \varphi_n, \psi\rangle \varphi_n
= \sum_{n= 0}^m \int_0^\infty e^{-(s^\alpha+\lambda_n)l}\D{l} \; \langle \varphi_n, \psi\rangle \varphi_n
\\
&=
\int_0^\infty e^{-s^\alpha\, l} \left[ \sum_{n=1}^m e^{-\lambda_n l}\langle \varphi_n, \psi\rangle \varphi_n\right] \D{l}
=
\int_0^\infty e^{-s^\alpha\, l} T_l^{\cD, V, m}\psi\,  \D{l}
\\
&=
\int_0^\infty \left[\int_0^\infty e^{-su} g_l(u)\,\D{u}\right] T_l^{\cD, V, m}\psi\; \D{l}
\\
&=
\int_0^\infty e^{-su} \left[\int_0^\infty  l^{-\nicefrac{1}{\alpha}} g_1(l^{-\nicefrac{1}{\alpha}}u)
T_l^{\cD, V, m}\psi\; \D{l}\right] \D{u}.
\end{align*}
Again comparing the two Laplace transforms, we get
$$
K^m_t\psi= \int_0^\infty  l^{-\nicefrac{1}{\alpha}} g_1(l^{-\nicefrac{1}{\alpha}}t) T_l^{\cD, V,m}\psi\; \D{l}.
$$
Letting $m\to\infty$ and using a denseness argument, we obtain
$$
K_t\psi= \int_0^\infty  l^{-\nicefrac{1}{\alpha}} g_1(l^{-\nicefrac{1}{\alpha}}t) T_l^{\cD, V}\psi\; \D{l},
\quad \psi\in L^2(\cD).
$$
Applying this equality to $F$ in \eqref{E1.2}, the result follows.
\end{proof}

\begin{remark}\label{R3.1}
Note that for every $\kappa>0$ and $s>0$ we have by \eqref{gbound}
\begin{align*}
\int_0^\kappa l^{-\nicefrac{1}{\alpha}} g_1(s l^{-\nicefrac{1}{\alpha}})\, \D{l}
&\leq
c_1 \int_0^\kappa l^{-\nicefrac{1}{\alpha}} \frac{1}{(1+ s l^{-\nicefrac{1}{\alpha}})^{1+\alpha}}\, \D{l}
\\
&= c_1 s^{1-\alpha} \int_0^{\kappa s^{-\alpha}} \frac{u}{(1+u^{\nicefrac{1}{\alpha}})^{1+\alpha}} \, \D{u},
\\
&\leq c_1 s^{1-\alpha} \int_0^{\infty} \frac{u}{(1+u^{\nicefrac{1}{\alpha}})^{1+\alpha}} \, \D{u} = c_2 s^{1-\alpha},
\end{align*}
with a constant $c_2$, and in the second line we used the substitution $l=s^\alpha u$. The above integral is finite since
$\alpha\in (0, 1)$. The estimate also implies that the rightmost term in \eqref{ET2.2A} is finite.
\end{remark}

As a consequence of Thoerem~\ref{T2.2} we have the following results which generalize Theorems ~1 and ~3 in \cite{LY}
substantially.
\begin{corollary}\label{C2.1}
Let $\varphi^i_0\in L^2(\cD)$ and $F_i\in L^2((0, T)\times\cD)$ for $i=1, 2$. Furthermore, assume that $\varphi^1_0
\geq \varphi^2_0$ and $F_1\geq F_2$. Then we have $\varphi^1(t, x)\geq \varphi^2(t, x)$ almost surely in $(0, T)\times\cD$.
\end{corollary}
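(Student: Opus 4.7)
The plan is to exploit the linearity of the problem together with the probabilistic representation established in Theorem~\ref{T2.2}. First I would set $\psi_0 \df \varphi^1_0 - \varphi^2_0$ and $G \df F_1 - F_2$; by hypothesis both are non-negative. Because \eqref{E1.1} is linear in $(\varphi_0, F)$, Theorem~\ref{exuni} guarantees that $\psi \df \varphi^1 - \varphi^2$ is the unique weak solution of \eqref{E1.1} with data $(\psi_0, G)$, and therefore admits the representation
\begin{equation*}
\psi(t,x) = \int_0^\infty T^{\cD,V}_u \psi_0(x)\, \eta_t(\D u) + \int_0^t \left[\int_0^\infty l^{-\nicefrac{1}{\alpha}} g_1(l^{-\nicefrac{1}{\alpha}}(t-s))\, T_l^{\cD,V} G(s,x)\, \D{l}\right] \D{s}
\end{equation*}
by Theorem~\ref{T2.2}. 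It will then suffice to show that both terms on the right-hand side are non-negative almost everywhere on $(0,T) \times \cD$.

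The core observation is that the operators $T^{\cD,V}_u$ are positivity-preserving. Indeed, from the definition \eqref{killedV}, if $f \geq 0$ then
$$
T^{\cD,V}_u f(x) = \ex^x\bigl[e^{-\int_0^u V(X_s)\,\D s} f(X_u) \Ind_{\{\uptau_\cD>u\}}\bigr] \geq 0,
$$
since $V \geq 0$ makes the exponential weight strictly positive and the remaining factors are non-negative. Applying this with $f = \psi_0$ and $f = G(s,\cdot)$ shows that the integrands $T^{\cD,V}_u \psi_0$ and $T^{\cD,V}_l G(s,\cdot)$ are non-negative in $x$ for almost every $u,l,s$.

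It then remains to verify that the measures against which we integrate are positive. For the first term, $\eta_t(\D u) = t\alpha^{-1} u^{-1-\nicefrac{1}{\alpha}} g_1(u^{-\nicefrac{1}{\alpha}} t)\, \D u$ is a positive measure on $(0,\infty)$, since $g_1$ is the transition density of the stable subordinator $\pro\xi$ at time $1$ and hence non-negative. For the second term, both $l^{-\nicefrac{1}{\alpha}}$ and $g_1(l^{-\nicefrac{1}{\alpha}}(t-s))$ are non-negative for $0 < s < t$ and $l > 0$. Consequently both integrals are non-negative almost everywhere in $(t,x)$, which yields $\psi(t,x) \geq 0$ and therefore $\varphi^1(t,x) \geq \varphi^2(t,x)$ almost everywhere on $(0,T)\times \cD$, as claimed.

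There is really no substantive obstacle here: the entire difficulty has already been absorbed into Theorem~\ref{T2.2}, which converts the abstract spectral formula \eqref{E1.2} into an expression manifestly built out of positive objects. The only minor care needed is that the equalities in \eqref{ET2.2A} are understood in $L^2(\cD)$ and that $\eta_t(\D u)$ is a probability measure on $(0,\infty)$ (hence the pointwise almost everywhere conclusion, rather than everywhere).
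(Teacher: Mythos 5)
Your proof is correct and follows exactly the route the paper intends (the paper gives no explicit argument for Corollary~\ref{C2.1}, stating only that it is a consequence of Theorem~\ref{T2.2}, and the intended mechanism is precisely the manifest positivity of every ingredient in \eqref{ET2.2A} together with linearity). One small inaccuracy: you do not actually need (and cannot directly invoke) Theorem~\ref{exuni} here, since the corollary only assumes $F_i\in L^2((0,T)\times\cD)$ whereas Theorem~\ref{exuni} requires $F\in L^\infty((0,T);L^2(\cD))$; instead, observe that $\varphi^1,\varphi^2$ are defined by the linear formula \eqref{E1.2}, so $\psi=\varphi^1-\varphi^2$ satisfies \eqref{E1.2} with data $(\psi_0,G)$ by linearity, and then Theorem~\ref{T2.2} applies directly to give the non-negative representation.
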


\begin{corollary}\label{C2.2}
Let $V_i\in \cC(\bar\cD)$ and satisfy $V_1\geq V_2$. Let $\varphi_i$ denote the solution of \eqref{E1.2} with non-negative data
$(\varphi_0, F)$ and $V_i$, $i=1,2$. Then we have $\varphi_1\leq \varphi_2$, almost surely in $(0, T)\times\cD$.
\end{corollary}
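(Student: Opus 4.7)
The plan is to reduce the comparison directly to the probabilistic representation in Theorem \ref{T2.2} and exploit the explicit exponential dependence of the killed Feynman-Kac semigroup on the potential.

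First I would establish the pointwise monotonicity
$$
T^{\cD, V_1}_t f(x) \;\leq\; T^{\cD, V_2}_t f(x)
$$
for any non-negative $f \in L^2(\cD)$ and almost every $x \in \cD$. This is immediate from the definition \eqref{killedV}:
$$
T^{\cD, V}_t f(x) = \ex^x\bigl[e^{-\int_0^t V(X_s)\,\D s}\, f(X_t)\,\Ind_{\{\uptau_\cD > t\}}\bigr].
$$
Indeed, along every path of $\pro X$ staying in $\cD$ until time $t$, the hypothesis $V_1 \geq V_2$ on $\bar \cD$ forces $\int_0^t V_1(X_s)\,\D s \geq \int_0^t V_2(X_s)\,\D s$, and hence $e^{-\int_0^t V_1(X_s)\,\D s} \leq e^{-\int_0^t V_2(X_s)\,\D s}$. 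Multiplying by the non-negative random variable $f(X_t)\Ind_{\{\uptau_\cD>t\}}$ and taking $\ex^x$ preserves the inequality.

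Next I would apply Theorem \ref{T2.2} to both $\varphi_1$ and $\varphi_2$, which gives
$$
\varphi_i(t,x) = \int_0^\infty T^{\cD, V_i}_u\varphi_0(x)\, \eta_t(\D u) + \int_0^t \!\int_0^\infty l^{-\nicefrac{1}{\alpha}} g_1\bigl(l^{-\nicefrac{1}{\alpha}}(t-s)\bigr)\, T_l^{\cD, V_i} F(s, x)\, \D l\, \D s,
$$
for $i = 1, 2$. Since $\eta_t(\D u)$ is a probability measure and the density $l^{-\nicefrac{1}{\alpha}} g_1(l^{-\nicefrac{1}{\alpha}}(t-s))$ is non-negative, and the data satisfy $\varphi_0 \geq 0$ and $F \geq 0$, the first step applies pointwise in the integrands. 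Integrating against these non-negative weights preserves the inequality and yields $\varphi_1(t,x) \leq \varphi_2(t,x)$ for almost every $(t,x) \in (0, T) \times \cD$.

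There is no substantial obstacle here: the argument is a direct and transparent consequence of the Feynman-Kac form of $T^{\cD,V}$ combined with the subordination formula \eqref{ET2.2A}. The only minor point to check is that the $L^2$-valued identities in Theorem \ref{T2.2} can legitimately be read as pointwise a.e.\ inequalities; this is justified because all integrands involved are non-negative, so Tonelli's theorem applies and the monotonicity is preserved under integration, as already essentially exploited in the estimate of Remark \ref{R3.1}.
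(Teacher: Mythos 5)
Your proof is correct and takes essentially the same approach as the paper: the paper's proof simply states that $T^{\cD,V_1}_t\psi \leq T^{\cD,V_2}_t\psi$ for $\psi\geq 0$ (which you verify pointwise from the Feynman-Kac formula) and lets it propagate through the probabilistic representation \eqref{ET2.2A}, exactly as you do. Your version merely spells out the details that the paper leaves implicit.
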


\begin{proof}
This follows from the fact that $T^{\cD, V_1}_t \psi \leq T^{\cD, V_2}_t \psi$ for all $t$ whenever $\psi\geq 0$.
\end{proof}

\bigskip
\subsection{Maximum principles and an Aleksandrov-Bakelman-Pucci estimate}

Next we turn to proving maximum principles. For the remainder of the paper we assume that the domain $\cD$ is such that all its
boundary points are regular with respect to $\pro X$, i.e.,
$$
\Prob^z(\uptau_\cD=0)=1, \quad z \in \partial\cD.
$$
In \cite{BL17a} we have shown that every bounded convex set is regular with respect to subordinate Brownian motion. When $\pro X$
is an isotropic $\alpha$-stable process, any domain $\cD$ with the exterior cone condition is regular with respect to this process.

Recall the notation
$$
h \gneq 0 \quad \mbox{meaning} \quad h(x) \geq 0 \; \mbox{for all $x \in \cD$ and $h \not\equiv 0$.}
$$
\begin{theorem}[\textbf{Strong maximum principle}]\label{T2.3}
Let $\varphi_0\gneq 0$ and $F=0$, and suppose that $\cD$ has a regular boundary with respect to $\pro X$. Furthermore, assume that
$\varphi_0 \in L^\infty(\cD)$. If $\varphi$ is a weak solution satisfying \eqref{E1.2}, then $\varphi(t, \cdot)\in \cC(\bar\cD)$ and
$\varphi(t, x)>0$  for every $t>0$ and $x\in\cD$.
\end{theorem}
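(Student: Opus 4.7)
The plan is to exploit the probabilistic representation of \eqref{E1.2} supplied by Theorem~\ref{T2.2}. With $F=0$ this reads
\begin{equation*}
\varphi(t,x)
\;=\; \int_0^\infty T_u^{\cD,V}\varphi_0(x)\, \eta_t(\D u)
\;=\; \ex^x\Bigl[e^{-\int_0^{\eta_t} V(X_s)\, \D s}\,\varphi_0(X_{\eta_t})\,\Ind_{\{\uptau_\cD>\eta_t\}}\Bigr],
\end{equation*}
so both strict positivity and boundary continuity can be reduced to corresponding properties of the killed Feynman--Kac semigroup $\semi{T^{\cD,V}}$ averaged against the inverse stable subordinator $\pro{\eta}$. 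The density $\eta_t(\D u)$ in \eqref{E2.8} is strictly positive on $(0,\infty)$ and normalises to one, and the integrand is uniformly bounded by $\norm{\varphi_0}_\infty$ since $V\geq 0$ and $\varphi_0\in L^\infty(\cD)$, so dominated convergence will be freely available.

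For strict positivity of $\varphi(t,x)$ on $(0,\infty)\times\cD$, I would first show that for every $u>0$ the operator $T_u^{\cD,V}$ admits a continuous and strictly positive kernel $p_u^{\cD,V}(x,y)$ on $\cD\times\cD$. Continuity is immediate from the eigenfunction expansion already recorded in the excerpt, while strict positivity follows from the Hartman--Wintner condition \eqref{HW}, which forces the free transition density $q_u$ to be continuous and strictly positive on $\Rd$; a short irreducibility argument transfers this to $p_u^{\cD,V}$ on $\cD\times\cD$, using that $V$ is bounded and $\cD$ is open so that trajectories have positive probability of remaining in $\cD$ up to time $u$ while reaching a prescribed neighbourhood. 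Since $\varphi_0\gneq 0$ forces $\{y\in\cD:\varphi_0(y)>0\}$ to have positive Lebesgue measure, this yields $T_u^{\cD,V}\varphi_0(x)>0$ for every $x\in\cD$ and $u>0$, and integrating against the strictly positive density $\eta_t(\D u)$ gives $\varphi(t,x)>0$.

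For continuity of $\varphi(t,\cdot)$ on $\bar\cD$, interior continuity is immediate from the joint continuity of $(u,x)\mapsto T_u^{\cD,V}\varphi_0(x)$ combined with dominated convergence in $u$ using the envelope $\norm{\varphi_0}_\infty$. The boundary step is the crucial one: fixing $z\in\partial\cD$ and $\cD\ni x_n\to z$, I need to conclude $\varphi(t,x_n)\to 0$. Regularity of $z$ with respect to $\pro X$ asserts $\Prob^z(\uptau_\cD=0)=1$, and by standard potential theory for Hunt processes this upgrades, via the continuous transition density supplied by \eqref{HW}, to
\begin{equation*}
\lim_{n\to\infty}\Prob^{x_n}(\uptau_\cD>u) \;=\; 0\qquad\text{for every } u>0.
\end{equation*}
Combined with $\bigl|T_u^{\cD,V}\varphi_0(x_n)\bigr|\leq \norm{\varphi_0}_\infty\,\Prob^{x_n}(\uptau_\cD>u)$ and a final application of dominated convergence in $u$, this yields $\varphi(t,x_n)\to 0$, which matches the Dirichlet datum in \eqref{E1.1} and closes the continuous extension to $\bar\cD$.

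The main obstacle is the last display: passing from the pointwise probabilistic regularity at a single boundary point to approach-continuity of $x\mapsto \Prob^x(\uptau_\cD>u)$ as $x\to z$ from inside $\cD$. Because $\pro X$ is a jump process which can in principle reach $\cD$ again after leaving, path continuity is not available; the correct input is the Blumenthal $0$--$1$ law together with the strong Markov property of the Hunt process $\pro X$, combined with the continuous, strictly positive transition density granted by \eqref{HW} that controls the short-time behaviour near $z$. Once this point is settled, the remainder of the argument is a routine dominated-convergence manipulation against the representation.
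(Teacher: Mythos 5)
Your overall strategy matches the paper's: reduce via the probabilistic representation \eqref{ET2.2A} to properties of $T_u^{\cD,V}\varphi_0$, exploit the envelope $\abs{T_u^{\cD,V}\varphi_0(x)}\leq\norm{\varphi_0}_\infty\,\Prob^x(\uptau_\cD>u)$, and close with dominated convergence against $\eta_t(\D u)$. But the two technical steps you leave as sketches are exactly where the real work lies, and as written they are asserted rather than proved.

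On strict positivity: you claim a ``short irreducibility argument'' transfers positivity of the free density $q_u$ to the killed kernel $T^{\cD,V}(u,x,y)$, because $\cD$ is open and $V$ is bounded. For a jump process this is not automatic: conditioned on $X_u=y$, the paths need not stay inside $\cD$, so $q_u(x-y)>0$ by itself gives nothing. The paper resolves this by writing the killed kernel through the subordinated Brownian bridge, $T^{\cD,V}(t,x,y)=\ex_{\Prob_S}\bigl[p_{S^\Psi_t}(x-y)\,\ex^{x,y}_{0,S^\Psi_t}\bigl[e^{-\int_0^t V(B_{S^\Psi_s})\,\D s}\Ind_{\{\uptau_\cD>t\}}\bigr]\bigr]$, and invoking the support theorem for Brownian paths to produce bridge paths confined to $\cD$ with positive probability. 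Your argument needs either this decomposition or some equivalent substitute; positivity of $q_u$ plus ``$\cD$ open'' does not close the gap.

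On boundary continuity: you correctly flag that the crux is to upgrade $\Prob^z(\uptau_\cD=0)=1$ at a single regular point $z$ to $\Prob^{x_n}(\uptau_\cD>u)\to 0$ for interior $x_n\to z$, and that path continuity is unavailable for a jump process. You then defer this to the Blumenthal $0$--$1$ law and ``standard potential theory''. That is precisely the step that cannot be handwaved: one needs a genuine continuity statement for the survival probability (or for $T^{\cD,V}_t 1$) up to regular boundary points. The paper supplies it by citing \cite[Lem.~3.1(v)]{BL17a}, that $x\mapsto T^{\cD,V}_t 1(x)$ extends continuously to $\bar\cD$ and vanishes on $\partial\cD$, combined with the Cauchy--Schwarz bound $\abs{T^{\cD,V}_t\varphi_0(x)}\leq\norm{\varphi_0}_{L^2(\cD)}\,\norm{T^{\cD,V}(t,x,\cdot)}_{L^\infty(\cD)}^{1/2}\,\abs{T^{\cD,V}_t 1(x)}^{1/2}$. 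Until you either cite such a lemma or prove it, your proof has a hole at the only nontrivial point; everything else in your write-up is a routine repackaging of \eqref{ET2.2A}.
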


\begin{proof}
From \cite[Lem.~3.1]{BL17a} we know that
\begin{equation}\label{ET2.3A}
T^{\cD, V}_t \varphi_0(x) =\int_{\cD} T^{\cD, V}(t, x, y) \varphi_0(y)\, \D{y},\quad t>0,
\end{equation}
with kernel
$$
T^{\cD, V}(t, x, y)= \ex_{\Prob_S}\left[p_{S^\Psi_t} (x-y)
\ex^{x, y}_{0, S^\Psi_t}\left[e^{-\int_0^t V(B_{S^\Psi_s}) \, \D{s}} \Ind_{\{\uptau_\cD>t\}} \right]\right],
$$
where $p_t(x-y)=(4\pi t)^{-\nicefrac{d}{2}} \exp(-\frac{\abs{x-y}^2}{4t})$, and $\ex^{x, y}_{0, S^\Psi_t}[\cdot]$ denotes
expectation with respect to the Brownian bridge measure from $x$ at time $0$ to $y$ at time $s$, evaluated at random time
$s=S^\Psi_t$. It is also shown in \cite[Lem.~3.1]{BL17a} that $T^{\cD, V}(t, x, y)=T^{\cD, V}(t,  y, x)$ for $t>0$ and
$T^{\cD, V}(t, x, y)$ is continuous in $(0,\infty) \times\cD\times\cD$. This implies that $x\mapsto T^{\cD, V}_t \varphi_0(x)$ is
continuous in $\cD$ for $t>0$.
Furthermore, by \cite[Lem.~3.1(v)]{BL17a}, we see
that $x\mapsto T^{\cD, V}_t 1(x)$ is continuous in $\bar\cD$ and vanishes on the boundary. Since
$$
\abs{T^{\cD, V}_t \varphi_0(x)} \leq
\norm{\varphi_0}_{L^2(\cD)} \norm{T^{\cD, V}(t, x, \cdot)}^{\nicefrac{1}{2}}_{L^\infty(\cD)}
\abs{T^{\cD, V}_t 1(x)}^{\nicefrac{1}{2}},
$$
it follows that $x\mapsto T^{\cD, V}_t \varphi_0(x)$ is continuous in $\bar\cD$ for every $t>0$ and the function vanishes on $\partial\cD$. Again, for every $t>0$,
$$
\abs{T^{\cD, V}_t \varphi_0(x)}\leq \norm{\varphi_0}_{L^\infty(\cD)}T^{\cD, V}_t 1(x)
\leq \norm{\varphi_0}_{L^\infty(\cD)} \Prob^x(\uptau_\cD>t).
$$
Since
$$
\varphi(t, x) = \int_0^\infty T^{\cD, V}_u \varphi_0(x)\, \eta_t(\D{u}), \quad t>0\,
$$
by \eqref{ET2.2A}, it follows from the dominated convergence theorem that $\varphi(t, \cdot)\in\cC(\bar\cD)$. The second
claim follows by \eqref{ET2.3A} and the observation that $T^{\cD, V}(t, x, y)>0$ for $t>0$ and $x, y\in\cD$. Indeed, note
that for every $\kappa>0$, by the support theorem of Brownian paths there are paths inside $\cD$ starting from $x$ at time
$0$ and ending at $y$ at time $\kappa$ with positive probability. This implies that for a given $S^\Psi_t$,
$$
\ex^{x, y}_{0, S^\Psi_t}\left[e^{-\int_0^t V(B_{S^\Psi_s}) \, \D{s}} \Ind_{\{\uptau_\cD>t\}} \right]>0
$$
holds which, in turn, implies $T^{\cD, V}(t, x, y)>0$.
\end{proof}

\begin{remark}
A much weaker version of Theorem~\ref{T2.2} has been obtained in \cite[Th.~1.1]{LRY} and \cite[Cor.~4]{LY} where $H^{\cD, V}$
was given by a divergence form elliptic operator. A strong maximum principle was conjectured in \cite{LRY}. Our result applies
to a much larger class of non-local operators and establishes the full strong maximum principle.
\end{remark}

We also have a stability result.
\begin{theorem}[\textbf{Stability of solutions}]
\label{stab}
Suppose that $V_i\in\cC(\bar\cD)$, $V_i\geq 0$, $\varphi_0^i\in L^2(\cD)$ and $F_i\in L^\infty((0, T); L^2(\cD))$ for $i=1,2$. Let
$\varphi_i$ be the corresponding weak solution given by \eqref{ET2.2A}. Then for a constant $C = C(T, d, \Psi, \cD, \alpha)$ we have
\begin{align}\label{ET3.4A}
\esssup_{(0, T)}\; \norm{\varphi_1(t, \cdot)-\varphi_2(t, \cdot)}_{L^2(\cD)}
&\leq
C\Bigl[ \mathfrak{A}\,\norm{V_1-V_2}_{L^\infty(\cD)}  + \norm{\varphi^1_0-\varphi^2_0}_{L^2(\cD)} \nonumber
\\
&\, \qquad + \esssup_{(0, T)} \norm{F_1(s, \cdot)-F_2(s, \cdot)}_{L^2(\cD)}\, \D{s}\Bigr]\,,
\end{align}
where
$$
\mathfrak{A}=
\max\left\{\norm{\varphi^1_0}_{L^2(\cD)}, \, \norm{\varphi^2_0}_{L^2(\cD)}, \, \esssup_{(0, T)} \norm{F_1(s, \cdot)}_{L^2(\cD)},
\, \esssup_{(0, T)} \norm{F_2(s, \cdot)}_{L^2(\cD)} \right\}.
$$
\end{theorem}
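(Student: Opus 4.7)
The plan is to work directly from the probabilistic representation \eqref{ET2.2A} given by \Cref{T2.2} and estimate $\varphi_1 - \varphi_2$ via the telescoping
\begin{align*}
\varphi_1(t,x) - \varphi_2(t,x)
&= \int_0^\infty T_u^{\cD, V_1}(\varphi_0^1 - \varphi_0^2)(x)\, \eta_t(\D u)
+ \int_0^\infty \bigl(T_u^{\cD, V_1} - T_u^{\cD, V_2}\bigr)\varphi_0^2(x)\, \eta_t(\D u)
\\
&\quad + \int_0^t\!\!\int_0^\infty l^{-1/\alpha} g_1(l^{-1/\alpha}(t-s))\, T_l^{\cD, V_1}(F_1 - F_2)(s,\cdot)(x)\, \D l\, \D s
\\
&\quad + \int_0^t\!\!\int_0^\infty l^{-1/\alpha} g_1(l^{-1/\alpha}(t-s))\, \bigl(T_l^{\cD, V_1} - T_l^{\cD, V_2}\bigr) F_2(s,\cdot)(x)\, \D l\, \D s,
\end{align*}
and then bounding each of the four pieces separately in $L^2(\cD)$ uniformly in $t\in(0, T)$. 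Uniqueness of the weak solution coming from \Cref{exuni} guarantees that the formula \eqref{ET2.2A} is indeed the object whose difference is being estimated.

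The key analytic input is the Feynman-Kac perturbation bound
$$
\|T_l^{\cD, V_1}\psi - T_l^{\cD, V_2}\psi\|_{L^2(\cD)} \;\leq\; l\,\|V_1 - V_2\|_{L^\infty(\cD)}\,\|\psi\|_{L^2(\cD)},
$$
which follows from the representation \eqref{killedV} combined with the elementary estimate $|e^{-a}-e^{-b}|\leq |a-b|$ on $[0,\infty)$ and the $L^2$-contraction of the killed semigroup already recorded in \eqref{ET2.2C}. Together with $\|T_l^{\cD, V_i}\psi\|_{L^2(\cD)}\leq \|\psi\|_{L^2(\cD)}$, this reduces matters to four explicit moment identities for the time weights,
$$
\int_0^\infty \eta_t(\D u) = 1, \qquad \int_0^\infty u\,\eta_t(\D u) = \frac{t^\alpha}{\Gamma(\alpha + 1)},
$$
$$
\int_0^\infty l^{-1/\alpha} g_1\bigl(l^{-1/\alpha} r\bigr)\,\D l = \frac{r^{\alpha-1}}{\Gamma(\alpha)}, \qquad
\int_0^\infty l^{\,1-1/\alpha} g_1\bigl(l^{-1/\alpha} r\bigr)\,\D l = \frac{r^{2\alpha - 1}}{\Gamma(2\alpha)}.
$$
All four follow from the substitution $w = l^{-1/\alpha} r$ (respectively $v = u^{-1/\alpha} t$, using \eqref{E2.8}) together with the Laplace-transform identity $\ex[\xi_1^{-\beta}] = \Gamma(\beta/\alpha)/(\alpha\,\Gamma(\beta))$ for the one-sided $\alpha$-stable law, applied at $\beta = \alpha$ and $\beta = 2\alpha$.

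Combining these ingredients majorises the four telescoped pieces, respectively, by $\|\varphi_0^1 - \varphi_0^2\|_{L^2(\cD)}$, by $\frac{T^\alpha}{\Gamma(\alpha+1)}\,\|V_1 - V_2\|_{L^\infty(\cD)}\|\varphi_0^2\|_{L^2(\cD)}$, by $\frac{T^\alpha}{\Gamma(\alpha+1)}\,\esssup_{(0,T)}\|F_1 - F_2\|_{L^2(\cD)}$, and (after the additional $s$-integration $\int_0^t (t-s)^{2\alpha-1}\D s = t^{2\alpha}/(2\alpha)$) by $\frac{T^{2\alpha}}{2\alpha\,\Gamma(2\alpha)}\|V_1 - V_2\|_{L^\infty(\cD)}\,\esssup_{(0,T)}\|F_2\|_{L^2(\cD)}$. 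Summing these four bounds, noting that $\|\varphi_0^2\|_{L^2(\cD)}$ and $\esssup\|F_2\|_{L^2(\cD)}$ are both controlled by $\mathfrak{A}$, and absorbing the $T$- and $\alpha$-dependent prefactors into a constant $C = C(T, d, \Psi, \cD, \alpha)$ yields \eqref{ET3.4A}. The only mildly delicate step is the finiteness of the second $g_1$-moment: the pointwise bound \eqref{gbound} alone gives merely the crude estimate of \Cref{R3.1}, so integrability over all of $(0,\infty)$ in $l$ is not immediate; the Laplace-transform evaluation above, which relies on $2\alpha < 2$, settles this cleanly for every $\alpha \in (0, 1)$.
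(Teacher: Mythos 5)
Your proof is correct and takes a genuinely different, and in fact cleaner, route than the paper's. Both proofs start from the stochastic representation \eqref{ET2.2A} and use Minkowski's integral inequality, but the way the time integrals are controlled differs. The paper bounds $\|T_t^{\cD,V_1}\psi_1 - T_t^{\cD,V_2}\psi_2\|_{L^2}$ in a single estimate (inequality \eqref{ET3.4B}), retaining the factor $\sup_x\Prob^x(\uptau_\cD > t)$, and then invokes the exit-time moment bound \eqref{ET3.4C} from \cite[Lem.~3.1]{BL17b} with $k=5$ to manufacture a decaying factor $1\wedge t^{-5}$; this tames the $t^2$ growth coming from the Lipschitz estimate of the Feynman-Kac factor when integrating against $\eta_t(\D u)$ and against $l^{-1/\alpha}g_1(l^{-1/\alpha}(t-s))\D l$. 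You instead telescope the difference into four pieces, use the bare $L^2$-contraction and the simple pointwise-in-$l$ perturbation bound $\|T_l^{\cD,V_1}\psi - T_l^{\cD,V_2}\psi\|_{L^2} \leq l\|V_1-V_2\|_\infty\|\psi\|_{L^2}$ (which drops the exit-time factor entirely), and absorb the resulting weight $l$ via the exact first-moment identities $\int_0^\infty u\,\eta_t(\D u) = t^\alpha/\Gamma(\alpha+1)$ and $\int_0^\infty l^{1-1/\alpha}g_1(l^{-1/\alpha}r)\D l = r^{2\alpha-1}/\Gamma(2\alpha)$, which are finite because the one-sided $\alpha$-stable law has finite negative moments of all orders. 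What this buys: your constant $C$ ends up depending only on $T$ and $\alpha$, whereas the paper's carries genuine $\cD$- and $\Psi$-dependence through the exit-time estimate; your route also avoids importing an external lemma. What the paper's route buys: it only requires polynomial tail control of $\Prob^x(\uptau_\cD>t)$ rather than exact stable-law moment formulas, so it is more robust against replacing the stable time change by something else. Your observation that \eqref{gbound} by itself does not give integrability of $l^{1-1/\alpha}g_1(l^{-1/\alpha}r)$ on $(0,\infty)$ for all $\alpha\in(0,1)$ is correct and worth having noted, although the closing parenthetical \lq\lq relies on $2\alpha<2$\rq\rq\ is vacuous; the real point is simply that $\ex[\xi_1^{-2\alpha}]<\infty$, which holds because $g_1$ vanishes faster than any power at the origin.
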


\begin{proof}
We begin with the following estimate. Consider $\psi_i\in L^2(\cD)$, $i=1,2,$ and extend them outside of $\cD$ by zero. Then
for every $x\in\cD, t>0,$ we note that
\begin{align*}
\abs{T^{\cD, V_1}_t\psi_1(x)-T^{\cD, V_2}_t\psi_2(x)} &\leq \ex^x\left[\abs{e^{-\int_0^t V_1(X_s)\, \D{s}} -
e^{-\int_0^t V_2(X_s)\, \D{s}}}\abs{\psi_1(X_t)} \Ind_{\{\uptau_\cD> t\}}\right]
\\
&\, \qquad + \ex^x\left[e^{-\int_0^t V_2(X_s)\, \D{s}}\abs{\psi_1(X_t)-\psi_2(X_t)} \Ind_{\{\uptau_\cD> t\}}\right]
\\
&\leq  t\norm{V_1-V_2}_{L^\infty(\cD)} \ex^x[\abs{\psi_1(X_t)} \Ind_{\{\uptau_\cD> t\}}] +
\ex^x\left[\abs{\psi_1(X_t)-\psi_2(X_t)} \Ind_{\{\uptau_\cD> t\}}\right]
\\[2mm]
&\leq t \norm{V_1-V_2}_{L^\infty(\cD)}\, \big(\sup_{x\in\cD}\Prob^x(\uptau_\cD>t)\big)^{\nicefrac{1}{2}}\,
\ex^x[\abs{\psi_1(X_t)}^2]^{\nicefrac{1}{2}}
\\
&\, \qquad+ \big(\sup_{x\in\cD}\Prob^x(\uptau_\cD>t)\big)^{\nicefrac{1}{2}}\, \ex[\abs{\psi_1(X_t)-\psi_2(X_t)}^2]^{\nicefrac{1}{2}},
\end{align*}
where in the second inequality we used the fact that $x\mapsto e^{-x}$ is Lipschitz continuous in $[0, \infty)$ and $V_2\geq 0$.
Thus, following a calculation similar to \eqref{ET2.2C}, we obtain
\begin{align}\label{ET3.4B}
\int_{\cD} \abs{T^{\cD, V_1}_t\psi_1(x)-T^{\cD, V_2}_t\psi_2(x)}^2\, \D{x}
& \leq
\kappa_1 t^2 \big(\sup_{x\in\cD}\Prob^x(\uptau_\cD>t)\big)\norm{V_1-V_2}^2_{L^\infty(\cD)}\, \norm{\psi_1}^2_{L^2(\cD)}\nonumber
\\
&\, \qquad + \big(\sup_{x\in\cD}\Prob^x(\uptau_\cD>t)\big)\, \norm{\psi_1-\psi_2}^2_{L^2(\cD)} \,,
\end{align}
with a constant $\kappa_1$. By \cite[Lem.~3.1]{BL17b}, for every $k\in\NN$ there exists a constant $c_k$, dependent on $d, k$,
satisfying
\begin{equation}\label{ET3.4C}
\sup_{\cD}\ex^x[\uptau^k_\cD]\leq \frac{c_k}{(\Psi([\diam \cD]^{-2}))^k}\,,
\end{equation}
where $\diam \cD$ denotes the diameter of $\cD$. Therefore choosing $k=5$ in \eqref{ET3.4C} and putting it in \eqref{ET3.4B}, with
a constant $\kappa_2$ we have
\begin{align}\label{ET3.4D}
& \int_{\cD} \abs{T^{\cD, V_1}_t\psi_1(x)-T^{\cD, V_2}_t\psi_2(x)}^2\, \D{x} \nonumber \\
& \qquad \leq \kappa_2 t^2 (1\wedge t^{-5})
\norm{V_1-V_2}^2_{L^\infty(\cD)}\, \norm{\psi_1}^2_{L^2(\cD)} + \kappa_2 (1\wedge t^{-5})\, \norm{\psi_1-\psi_2}^2_{L^2(\cD)} \,.
\end{align}
We use \eqref{ET3.4D} to estimate the left hand side of \eqref{ET3.4A}. Our main ingredient is formula \eqref{ET2.2A}. By
Minkowski's integral inequality we note that
\begin{align}\label{ET3.4E}
&\left[\int_{\cD}\left|\int_0^\infty T^{\cD, V_1}_u\varphi^1_0(x)- T^{\cD, V_1}_u\varphi^2_0(x) \,
\eta_t(\D{u})\right|^2 \, \D{x}\right]^{\nicefrac{1}{2}}
\nonumber
\\
&\quad \leq \int_0^\infty \left[\int_{\cD}\abs{T^{\cD, V_1}_u\varphi^1_0(x)- T^{\cD, V_1}_u\varphi^2_0(x)}^2 \, \D{x}\right]^{\nicefrac{1}{2}}
\eta_t(\D{u})
\nonumber
\\
&\quad \leq \sqrt{\kappa_2} \int_0^\infty \left[u(1\wedge u^{-\nicefrac{5}{2}}) \mathfrak{A}\norm{V_1-V_2}_{L^\infty(\cD)} +
(1\wedge u^{-\nicefrac{5}{2}})\norm{\varphi^1_0-\varphi^2_0}_{L^2(\cD)}
\right] \eta_t(\D{u})\nonumber
\\
&\quad \leq \kappa_3\, \left(\mathfrak{A}\norm{V_1-V_2}_{L^\infty(\cD)} + \norm{\varphi^1_0-\varphi^2_0}_{L^2(\cD)}\right),
\end{align}
with a constant $\kappa_3$, where in the third line we used \eqref{ET3.4D}. Now we compute the difference for the rightmost
term in \eqref{ET2.2A}. We again apply Minkowski's integral inequality to get
\begin{align}\label{ET3.4F}
&\left[\int_{\cD} \left[ \int_0^t \int_0^\infty  l^{-\nicefrac{1}{\alpha}} g_1(l^{-\nicefrac{1}{\alpha}}(t-s))
\abs{T_l^{\cD, V_1}F_1(s, x)-T_l^{\cD, V_2} F_2(s,x)}\; \D{l}\,\D{s} \right]^2 \, \D{x}\right]^{\nicefrac{1}{2}}\nonumber
\\
&\quad \leq
\int_0^t \int_0^\infty  l^{-\nicefrac{1}{\alpha}} g_1(l^{-\nicefrac{1}{\alpha}}(t-s))
\left[\int_{\cD}\abs{T_l^{\cD, V_1}F_1(s, x)-T_l^{\cD, V_2} F_2(s,x)}^2\, \D{x}\right]^{\nicefrac{1}{2}}\, \D{l}\,\D{s}\nonumber
\\
&\quad \leq
\int_0^t \int_0^\infty  l^{-\nicefrac{1}{\alpha}} g_1(l^{-\nicefrac{1}{\alpha}}(t-s))
\left[\int_{\cD}\abs{T_l^{\cD, V_1}F_1(s, x)-T_l^{\cD, V_2} F_2(s,x)}^2\, \D{x}\right]^{\nicefrac{1}{2}}\, \D{l}\,\D{s}\nonumber
\\
&\quad \leq
\kappa_4\, \int_0^t \int_0^\infty  l^{-\nicefrac{1}{\alpha}}g_1(l^{-\nicefrac{1}{\alpha}}(t-s))
\left[l(1\wedge l^{-\nicefrac{5}{2}}) \norm{F_1(s,\cdot)}_{L^2(\cD)}\,\norm{V_1-V_2}_{L^\infty(\cD)} + \right.\nonumber \\
& \hspace{6cm} \qquad\left. (1\wedge l^{-\nicefrac{5}{2}})\norm{F_1(s, \cdot)-F_2(s, \cdot)}_{L^2(\cD)}\right]\, \D{l}\,\D{s}
\nonumber
\\
&\quad \leq
\kappa_5\,  \left(\mathfrak{A} \norm{V_1-V_2}_{L^\infty(\cD)} + \esssup_{(0, T)} \norm{F_1(s, \cdot)-F_2(s, \cdot)}_{L^2(\cD)}\, \D{s}\right),
\end{align}
with constants $\kappa_4, \kappa_5$, where in the fourth line we used again \eqref{ET3.4D}. In \eqref{ET3.4F} we made use of
Remark~\ref{R3.1} showing that the integral in $l$ converges. Then \eqref{ET3.4A} follows by combining \eqref{ET2.2A}, \eqref{ET3.4E}
and \eqref{ET3.4F}.
\end{proof}

The next result gives an Aleksandrov-Bakelman-Pucci (ABP) estimate for the time-fractional Cauchy problem \eqref{E1.1}.
We will use a class of Bernstein functions with the following property, for an introduction see \cite{BGR14b}.
\begin{assumption}
\label{WLSC}
The function $\Psi$ is said to satisfy a weak lower scaling (WLSC) property with parameters ${\underline\mu} > 0$,
$\underline{c} \in(0, 1]$ and $\underline{\theta}\geq 0$, if
$$
\Psi(\gamma u) \;\geq\; \underline{c}\, \gamma^{\underline\mu} \Psi(u), \quad u>\underline{\theta}, \; \gamma\geq 1.
$$
\end{assumption}

\vspace{0.1cm}
\noindent
We note that the WLSC property implies that the Hartman-Wintner condition \eqref{HW} holds.

\begin{example}\label{Eg2.2}
The specific cases of $\Psi$ in Example \ref{Eg2.1} satisfy Assumption \ref{WLSC} with $\underline{\theta}=0$
and the following values:
\begin{itemize}
\item[(i)]
${\underline\mu} = \frac{\nu}{2}$
\vspace{0.1cm}
\item[(ii)]
${\underline\mu} = \frac{\nu}{2}$
\vspace{0.1cm}
\item[(iii)]
${\underline\mu} = \frac{\nu}{2} \wedge \frac{\tilde\nu}{2}$
\vspace{0.1cm}
\item[(iv)]
${\underline\mu}=\frac{\nu-\tilde\nu}{2}$
\vspace{0.1cm}
\item[(v)]
${\underline\mu}=\frac{\nu}{2}$,
\vspace{0.1cm}
\end{itemize}
where the order of cases listed above corresponds to the enumeration in Example \ref{Eg2.1}.
\end{example}

We recall the standing assumption $V\geq 0$.
\begin{theorem}[\textbf{ABP estimate}]\label{T2.4}
Let $\Psi$ satisfy Assumption \ref{WLSC}, and $\varphi$ be a weak solution of \eqref{E1.1} given by representation
\eqref{E1.2}. Furthermore, assume $\varphi^+_0\in L^\infty(\cD)$ and $F\in L^p((0, T)\times \cD)$ with $p >
\frac{d}{2\underline{\mu}} + \frac{1}{\alpha}$. Then for almost every $(t, x)\in(0, T) \times \cD$ we have
\begin{equation}\label{ET2.4A}
\varphi^+(t, x)\leq \norm{\varphi^+_0}_{L^\infty(\cD)} + C \norm{F^+}_{L^p((0, T)\times \cD)}\,,
\end{equation}
with a constant $C$, dependent on $\cD, p, d, \Psi$. (Here the plus superscript means positive part.)
\end{theorem}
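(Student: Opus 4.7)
The plan is to argue directly from the probabilistic representation \eqref{ET2.2A}, exploiting positivity of the Feynman--Kac semigroup and the fact that $\eta_t$ is a probability measure on $(0,\infty)$. For the first term of \eqref{ET2.2A}, using $V\ge 0$ we have $T_u^{\cD,V}\varphi_0(x)\le T_u^{\cD,V}\varphi_0^+(x)\le \|\varphi_0^+\|_{L^\infty(\cD)}$, and integrating against $\eta_t(\D u)$ yields the bound $\|\varphi_0^+\|_{L^\infty(\cD)}$ in \eqref{ET2.4A}. For the source term, the same positivity and the trivial estimate $e^{-\int V}\le 1$ give $T_l^{\cD,V}F(s,x)\le \int_\cD F^+(s,y)\,q_l(x-y)\,\D y$, where $q_l$ is the radially decreasing transition density of the free subordinate Brownian motion. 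Thus, introducing the time-fractional kernel
$$
K(r,x,y)\df \int_0^\infty l^{-\nicefrac{1}{\alpha}} g_1(l^{-\nicefrac{1}{\alpha}}r)\,q_l(x-y)\,\D l,
$$
the source contribution in \eqref{ET2.2A} is bounded by $\int_0^t\!\int_\cD F^+(s,y)\,K(t-s,x,y)\,\D y\,\D s$.

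The core of the argument is an $L^q$ kernel estimate on $K$. H\"older's inequality on $(0,t)\times\cD$ with conjugate exponent $q$ of $p$ reduces the task to showing that $\|K(t-\cdot,x,\cdot)\|_{L^q((0,t)\times\cD)}$ is bounded by a constant depending only on $\cD,p,d,\Psi$. By Minkowski's integral inequality, the $\D l$ integral can be pulled past the $L^q$ norm, so that
$$
\|K(t-\cdot,x,\cdot)\|_{L^q((0,t)\times\cD)} \le \int_0^\infty l^{-\nicefrac{1}{\alpha}}\, \bigl\|g_1(l^{-\nicefrac{1}{\alpha}}(t-\cdot))\bigr\|_{L^q((0,t))}\, \|q_l(x-\cdot)\|_{L^q(\cD)}\,\D l.
$$
The substitution $u=l^{-\nicefrac{1}{\alpha}}(t-s)$, together with integrability of $g_1^q$ on $(0,\infty)$ (which follows from the polynomial tail bound \eqref{gbound} combined with the super-polynomial decay of $g_1$ near the origin for a stable subordinator), gives $\|g_1(l^{-\nicefrac{1}{\alpha}}(t-\cdot))\|_{L^q((0,t))}\le C\, l^{1/(\alpha q)}$; while $\|q_l(x-\cdot)\|_{L^q(\cD)}\le \|q_l\|_\infty^{1/p}$ via the standard splitting $\int_\cD q_l^q\,\D y\le \|q_l\|_\infty^{q-1}\int_\cD q_l\,\D y\le \|q_l\|_\infty^{q-1}$.

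The last ingredient is the heat-kernel sup bound $\|q_l\|_\infty\le C\,[\Psi^{-1}(1/l)]^{d/2}$, which holds under WLSC by Fourier inversion; combined with the WLSC-consequence $\Psi(v)\gtrsim v^{\underline{\mu}}$ for $v\ge 1$, it yields $\|q_l\|_\infty\le C\, l^{-d/(2\underline{\mu})}$ for small $l$. Substituting,
$$
\|K(t-\cdot,x,\cdot)\|_{L^q} \le C\int_0^\infty l^{-\nicefrac{1}{\alpha p}}\,\|q_l\|_\infty^{1/p}\,\D l,
$$
and the integrand behaves like $l^{-(\nicefrac{1}{\alpha}+\nicefrac{d}{2\underline{\mu}})/p}$ as $l\downarrow 0$, which is integrable \emph{exactly} when $p>\nicefrac{d}{2\underline{\mu}}+\nicefrac{1}{\alpha}$---precisely the hypothesis, with the neat space--time separation advertised in the Introduction. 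The tail at $l\to\infty$ is handled by observing that the upper endpoint $l^{-\nicefrac{1}{\alpha}}t$ in the time factor becomes small, and $\Prob(\xi_1\le\tau)\le e^{-c\tau^{-\alpha/(1-\alpha)}}$ as $\tau\downarrow 0$ (Chernoff bound for the $\alpha$-stable subordinator) gives rapid decay; if $T$-uniform constants are desired, one can sharpen this by replacing $q_l$ with the Dirichlet kernel $T^{\cD,V}(l,x,y)$, which has exponential decay in $l$ since $\sup_x\Prob^x(\uptau_\cD>l)\le C e^{-\lambda_1^0 l}$. The main obstacle is thus establishing the WLSC heat-kernel bound and making the critical exponent count come out right; once this is secured, \eqref{ET2.4A} follows by combining the two term-wise estimates above.
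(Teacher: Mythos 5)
Your proposal is correct, and it reorganizes the estimate in a genuinely different way from the paper. The paper works from the inside out: for fixed $l$ it bounds $T^{\cD,V}_l F(s,\cdot)$ in terms of $l^{-d/(2p\underline\mu)}\norm{F^+(s,\cdot)}_{L^p(\cD)}$ (Jensen against the sub-probability density $q_l$), then integrates this against $l^{-\nicefrac{1}{\alpha}}g_1(l^{-\nicefrac{1}{\alpha}}(t-s))$ over $l\in(0,\kappa_2]$ via the explicit substitution of the paper's \eqref{Add3} to produce the power-law kernel $(t-s)^{\alpha-1-\nicefrac{d\alpha}{2p\underline\mu}}$, and only then applies a single H\"older/Young step in $s$; the large-$l$ range $l>\kappa_2$ is handled separately with the crude uniform bound $\norm{q_l}_\infty\le q_{\kappa_2}(0)$ and the trivial integrability of $l^{-\nicefrac{1}{\alpha}}$ at infinity (since $\alpha<1$). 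You instead apply H\"older jointly on $(0,t)\times\cD$ first, then use Minkowski to pull the $l$-integral past the $L^q$ norm, and the norms factor — the dimensional count $l^{-\nicefrac{1}{\alpha p}}\norm{q_l}_\infty^{\nicefrac{1}{p}}$ that you obtain is exactly the same as the paper's, so the critical exponent $p>\nicefrac{d}{2\underline\mu}+\nicefrac{1}{\alpha}$ emerges for the same reason; your approach is arguably more systematic, at the price of needing a sharper large-$l$ argument.

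That large-$l$ tail is the one place where your write-up is a sketch rather than a proof: with the crude bounds $\norm{g_1(l^{-\nicefrac{1}{\alpha}}(t-\cdot))}_{L^q(0,t)}\le Cl^{\nicefrac{1}{\alpha q}}$ and $\norm{q_l}_\infty\lesssim 1$, the resulting integrand $l^{-\nicefrac{1}{\alpha p}}$ is \emph{not} integrable at $l=\infty$ because the standing hypothesis already forces $\alpha p>1$. You correctly identify this and offer two repairs (using the actual finite upper limit $l^{-\nicefrac{1}{\alpha}}t$ together with the super-exponential vanishing of $g_1$ at $0^+$, or replacing $q_l$ by the Dirichlet kernel $T^{\cD,V}(l,\cdot,\cdot)$ with its exponential decay in $l$), and either one works, but as written this step would need to be carried out in detail to complete the proof. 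Note also that the first repair introduces $t$-dependence and hence $T$-dependence in the constant; the paper's split at a fixed $\kappa_2$ is a slightly slicker way to decouple the small-$l$ scaling regime (where WLSC gives $\norm{q_l}_\infty\lesssim l^{-\nicefrac{d}{2\underline\mu}}$) from the large-$l$ regime without invoking fine near-origin asymptotics of the stable density.
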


\begin{proof}
As before, we denote by $q_t$ the transition density of $\pro X$. 
Extending $\varphi_0$ by $0$ outside of $\cD$, we note that for $t>0$
$$
T^{\cD, V}_t \varphi(x)\leq \ex^x\left[\varphi(X_t)\right]\leq \int_{\Rd} \varphi^+_0(y) q_t(x-y)\, \D{y}
\leq \norm{\varphi^+_0}_{L^\infty(\cD)},
$$
where the last estimate follows by the H\"{o}lder inequality. Thus also
\begin{equation}\label{ET2.4B}
\int_0^\infty T^{\cD, V}_u\varphi_0(x)\, \eta_t(\D{u})\leq \norm{\varphi^+_0}_{L^\infty(\cD)}.
\end{equation}
Taking into account \eqref{ET2.2A} and \eqref{ET2.4B}, we only need to estimate the rightmost term in \eqref{ET2.2A}.
Due to Assumption~\ref{WLSC} and \cite[Lem.~2.1]{BL17b} there exist positive constants $\kappa_1, \kappa_2$ satisfying
\begin{equation}\label{ET2.4C}
q_t(x)\leq \kappa_1 t^{-\frac{d}{2\underline{\mu}}}, \quad x\in\Rd, \; t\in (0, \kappa_2]\,.
\end{equation}
Consider $\psi\in L^p(\cD)$ with $p>\frac{d}{2\underline{\mu}} +
\frac{1}{\alpha}$, and extend it outside $\cD$ by $0$. Then for $t\in(0, \kappa_2]$ we obtain
$$
T^{\cD, V}_t \psi(x) \leq \int_{\Rd} \psi^+(y) q_t (x-y)\, \D{y}
\leq \left[\int_{\Rd} \psi^+(y)^p q_t (x-y)\, \D{y}\right]^{\nicefrac{1}{p}}
\leq \kappa_1^{\nicefrac{1}{p}} t^{-\frac{d}{2p\underline{\mu}}}\norm{\psi^+}_{L^p(\cD)},
$$
where in the last line we used \eqref{ET2.4C}. Therefore,
\begin{align}\label{ET2.4E}
\int_0^{\kappa_2} l^{-\nicefrac{1}{\alpha}}g_1(l^{-\nicefrac{1}{\alpha}}(t-s)) T^{\cD, V}_l \psi(x)\, \D{l}
& \leq \kappa_1^{\nicefrac{1}{p}}  \norm{\psi}_{L^p(\cD)}
\int_0^{\kappa_2} l^{-\nicefrac{1}{\alpha}} l^{-\frac{d}{2p\underline{\mu}}}g_1(l^{-\nicefrac{1}{\alpha}}(t-s))\, \D{l}
\nonumber
\\
&\leq \kappa_3 (t-s)^{\alpha-1-\frac{d\alpha}{2p\underline{\mu}}}\norm{\psi^+}_{L^p(\cD)},
\end{align}
with a constant $\kappa_3$. To obtain the last line, note that by \eqref{gbound} (see also Remark~\ref{R3.1}) with $\gamma
=\frac{1}{\alpha}+\frac{d}{2p\underline{\mu}}$ and $s>0$ we have
\begin{align}\label{Add3}
\int_0^{\kappa_2} l^{-\gamma} g_1(l^{-\nicefrac{1}{\alpha}}s)\, \D{l}
& \leq
c_1 \int_0^{\kappa_2} l^{-\gamma}\frac{1}{(1 + l^{-\nicefrac{1}{\alpha}}s)^{1+\alpha}} \, \D{l}\nonumber
\\
&=
s^{-\alpha\gamma + \alpha} \int_0^{\kappa_2 s^{-\alpha}}
\frac{u^{1-\frac{d}{2p\underline{\mu}}}}{(1+u^{\nicefrac{1}{\alpha}})^{1+\alpha}}\, \D{u} \nonumber \\
& \leq  s^{-\alpha\gamma + \alpha} \int_0^{\infty} \frac{u^{1-\frac{d}{2p\underline{\mu}}}}
{(1+u^{\nicefrac{1}{\alpha}})^{1+\alpha}}\, \D{u},
\end{align}
where the integral above is finite and $-\alpha\gamma + \alpha=\alpha-1-\frac{d\alpha}{2p\underline{\mu}}$.  Observe that
the assumption $p>\frac{d}{2\underline{\mu}} +\frac{1}{\alpha}$ gives
$(\alpha-1-\frac{d\alpha}{2p\underline{\mu}})\frac{p}{p-1}+1>0$. Thus by using Young's inequality and \eqref{ET2.4E} we obtain
that
\begin{align}\label{Add2}
\int_0^t \int_0^{\kappa_2} l^{-\nicefrac{1}{\alpha}}g_1(l^{-\nicefrac{1}{\alpha}}(t-s)) T^{\cD, V}_l F^+(s, x)\, \D{l}\, \D{s}
& \leq \kappa_3 \int_0^t (t-s)^{\alpha-1-\frac{d\alpha}{2p\underline{\mu}}} \norm{F^+(s, \cdot)}_{L^p(\cD)}\, \D{s}\nonumber
\\
&\leq \kappa_3^\prime \norm{F^+}_{L^p((0, T)\times \cD)},
\end{align}
with a constant $\kappa_3^\prime$.

Next consider $t\in(\kappa_2, \infty)$. Since for $t\geq \kappa_2$ we have
$$
\sup_{x \in \Rd} q_t(x) =\frac{1}{(2\pi)^d}\int_{\Rd} e^{-i x\cdot y} e^{-t\Psi(\abs{y}^2)} \D y
\leq
\frac{1}{(2\pi)^d}\int_{\Rd}  e^{-t\Psi(\abs{y}^2)} \D y
\leq
\frac{1}{(2\pi)^d}\int_{\Rd}  e^{-\kappa_2\Psi(\abs{y}^2)} \D y,
$$
we obtain
\begin{equation}\label{AB100}
\sup_{t\geq \kappa_2}\sup_{x, y \in \Rd}q_t(x, y)\leq q_{\kappa_2}(0).
\end{equation}
Then for $t\geq \kappa_2$ we get
\begin{align*}
T^{\cD, V}_t \psi(x)\leq \ex^x\left[\psi^+(X_t) \Ind_{\{\uptau_\cD> t\}}\right]
&\leq \ex^x\left[\left(\psi^+(X_t)\right)^p \right]^{\frac{1}{p}} \Prob^x(\uptau_\cD> t)^{\frac{p-1}{p}}
\\
&\leq q_{\kappa_2}(0)^{\frac{1}{p}} \norm{\psi^+}_{L^p(\cD)},
\end{align*}
using \eqref{AB100} in the last line. Since $\frac{1}{\alpha}>1$, we have then
\begin{equation}\label{ET2.4F}
\int_{\kappa_2}^\infty  l^{-\nicefrac{1}{\alpha}} g_1(l^{-\nicefrac{1}{\alpha}}(t-s)) T_l^{\cD, V}\psi(x)\; \D{l}
\leq \kappa_4 \norm{\psi^+}_{L^p(\cD)},
\end{equation}
with constant $\kappa_4$. From \eqref{ET2.4F} we get
$$
\int_0^{t} \int_{\kappa_2}^\infty  l^{-\nicefrac{1}{\alpha}} g_1(l^{-\nicefrac{1}{\alpha}}(t-s)) T_l^{\cD, V}F(s,x)\; \D{l}
\leq \kappa_5 \norm{F^+}_{L^p((0, T)\times \cD)},
$$
with a constant $\kappa_5$. Thus \eqref{ET2.4A} follows by using \eqref{ET2.2A}, \eqref{ET2.4B} and \eqref{Add2}.
\end{proof}

\begin{remark}
It is interesting to point out the similarity of the bound in Theorem~\ref{T2.4} and the result obtained in
\cite[Th.~3.2]{BL17b}. For $\alpha=1$ a similar parabolic ABP estimate is obtained in \cite[Th.~3.2]{BL17b} for
$p>\frac{d}{2\underline{\mu}}+1$, and now it is seen that the second term equal to 1 is a contribution due to the
usual time-derivative.
\end{remark}

The above results are useful in the study of the inverse source problem discussed in \cite{LRY}. This problem can be
roughly stated as follows: Given $x_0\in\cD$, $T>0$ fixed, and $\varphi_0=0$, if the inhomogeneity is given in the form
$F(t, x)=\rho_1(t)\rho_2(x)$, is it possible to determine $\rho_1$ by single point observation data $\varphi(t, x_0)$
for $t\in[0, T]$?
The spatial component $\rho_2$ simulates, for instance, a source of contaminants which may be hazardous, see
\cite{JLLY, LRY, SY11} for further discussion.

\begin{theorem}
\label{inver}
Let $\varphi_0=0$ and $F(t, x)=\rho_1(s)\rho_2(x)$. We assume that either $\rho_2\in L^\infty(\cD)$ or, in case
Assumption~\ref{WLSC} holds, $\rho_2\in L^p(\cD)$ for some $p>\frac{d}{2\underline{\mu}}$. Then for $\varphi$
satisfying \eqref{ET2.2A} we have that the map
$$
\cD \ni x\mapsto \varphi(\cdot, x)\in L^1(0, T)
$$
is continuous. Moreover, if $\rho_2\gneq 0$ and for some $x_0\in\cD$ we have $\varphi(\cdot, x_0)=0$ in $L^1(0, T)$, then $\rho_1=0$.
\end{theorem}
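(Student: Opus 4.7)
The plan is to exploit the convolution structure in time afforded by the separable source $F(t,x) = \rho_1(t)\rho_2(x)$. By linearity of $T_l^{\cD, V}$ in the integrand, the representation \eqref{ET2.2A} with $\varphi_0 = 0$ rewrites as
\begin{equation*}
\varphi(t, x) = \int_0^t \rho_1(s)\, K(t-s, x) \, \D{s}, \quad\text{where}\quad K(t, x) \df \int_0^\infty l^{-\nicefrac{1}{\alpha}} g_1(l^{-\nicefrac{1}{\alpha}} t)\, T_l^{\cD, V}\rho_2(x)\, \D{l}.
\end{equation*}
Thus $\varphi(\cdot, x) = \rho_1 * K(\cdot, x)$ on $(0,T)$, and the proof splits into: (i) continuity of $x \mapsto K(\cdot, x)$, hence of $x \mapsto \varphi(\cdot, x)$, in $L^1(0, T)$; and (ii) showing $K(t, x_0) > 0$ for every $t > 0$ so that a Titchmarsh-type argument forces $\rho_1 \equiv 0$.

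For step (i), the joint continuity of the kernel $T^{\cD, V}(l, \cdot, \cdot)$ on $\cD \times \cD$ established in the proof of Theorem~\ref{T2.3} gives continuity of $x \mapsto T_l^{\cD, V}\rho_2(x)$ for each fixed $l > 0$. To push this through the $\D{l}\,\D{s}$ integrals I need a uniform-in-$x$ dominator integrable over $(0,\infty)_l \times (0,T)_t$. Under $\rho_2 \in L^\infty(\cD)$ this is trivial from $\abs{T_l^{\cD, V}\rho_2(x)} \leq \norm{\rho_2}_{L^\infty(\cD)}$. Under $\rho_2 \in L^p(\cD)$ with $p > \frac{d}{2\underline\mu}$, I split the $l$-integral at the constant $\kappa_2$ of \eqref{ET2.4C}: on $(0, \kappa_2]$ the on-diagonal heat kernel bound gives $T_l^{\cD, V}\rho_2(x) \leq \kappa_1^{\nicefrac{1}{p}} l^{-\frac{d}{2p\underline{\mu}}}\norm{\rho_2}_{L^p(\cD)}$ as in the proof of Theorem~\ref{T2.4}, while on $(\kappa_2, \infty)$ the uniform bound \eqref{AB100} combined with H\"older's inequality yields a constant bound. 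The condition $p > \frac{d}{2\underline\mu}$ is exactly what makes the one-dimensional integral of type \eqref{Add3} converge, so dominated convergence first produces $K(t, x_n) \to K(t, x)$ for each $t > 0$, and then, after a Fubini interchange in $\int_0^T\int_0^t \abs{\rho_1(s)}\abs{K(t-s,x_n)-K(t-s,x)}\,\D{s}\,\D{t}$, delivers $\varphi(\cdot,x_n)\to\varphi(\cdot,x)$ in $L^1(0,T)$.

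For step (ii), the strong maximum principle together with $\rho_2 \gneq 0$ yields $T_l^{\cD, V}\rho_2(x_0) > 0$ for every $l > 0$, since the kernel $T^{\cD, V}(l, x_0, y)$ is strictly positive for all $y \in \cD$. Combined with $g_1 > 0$ on $(0, \infty)$, this gives $K(t, x_0) > 0$ for every $t > 0$. The hypothesis $\varphi(\cdot, x_0) = 0$ in $L^1(0, T)$ then reads $\rho_1 * K(\cdot, x_0) = 0$ on $(0, T)$, and Titchmarsh's convolution theorem produces $a, b \geq 0$ with $a + b \geq T$ such that $\rho_1 = 0$ a.e.\ on $[0, a]$ and $K(\cdot, x_0) = 0$ a.e.\ on $[0, b]$. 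Strict positivity of $K(\cdot, x_0)$ forces $b = 0$, whence $a \geq T$ and $\rho_1 \equiv 0$ on $(0, T)$.

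The main obstacle I foresee is the $L^p$ case in step (i): the bound on $T_l^{\cD, V}\rho_2(x)$ blows up as $l \downarrow 0$ and must be balanced against the small-argument behaviour of $g_1(l^{-\nicefrac{1}{\alpha}}(t-s))$ carefully to keep the dominator integrable, and the exponent condition $p > \frac{d}{2\underline\mu}$ is sharp precisely here. Once continuity and positivity are in hand, step (ii) is a clean one-line application of Titchmarsh.
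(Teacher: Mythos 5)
Your proposal matches the paper's proof essentially line for line: your $K(t,x)$ is the paper's $\chi_t(x)$, you split the $l$-integral at $\kappa_2$ using the same on-diagonal heat kernel bound and \eqref{AB100}, you exploit the same convolution structure $\varphi(\cdot,x)=\rho_1 * \chi_\cdot(x)$, and you conclude with the same Titchmarsh-plus-strict-positivity argument. The only cosmetic difference is that the paper invokes Young's inequality where you mention a Fubini interchange in the final $L^1$ estimate, but these amount to the same step.
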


\begin{proof}
Recall from \eqref{ET2.3A}
\begin{equation*}
T^{\cD, V}_t f(x) =\int_{\cD} T^{\cD, V}(t, x, y) f(y)\, \D{y},\quad t>0,
\end{equation*}
where the kernel $T(t, x, y)$ is bounded for every $t>0$ and continuous in the variables $x, y\in\cD$. This implies for
every $f\in L^1(\cD)$ that the map $x\mapsto T^{\cD, V}_t f(x)$ is continuous in $\cD$, for every $t>0$. For $x\in\cD$ and
$t>0$ define
$$
\chi_t(x)= \int_0^\infty  l^{-\nicefrac{1}{\alpha}} g_1(l^{-\nicefrac{1}{\alpha}}t) T_l^{\cD, V}\rho_2 (x)\; \D{l}.
$$
If $\rho_2\in L^\infty(\cD)$, then we have $\abs{T_l^{\cD, V}\rho_2 (x)}\leq \norm{\rho_2}_{L^\infty(\cD)}$, as $V\geq 0$,
 and therefore,
by Remark~\ref{R3.1} and the dominated convergence theorem it follows that $\chi_t$ is continuous in $\cD$ for $t>0$. Also,
observe that in this situation
$$
\abs{\chi_t(x)}\leq \kappa_3 \norm{\rho_2}_{L^\infty(\cD)} (t^{\alpha-1}+1), \quad  t>0, \; x\in\cD,
$$
with a constant $\kappa_3$; see also Remark~\ref{R3.1}. Now suppose that Assumption~\ref{WLSC} holds and $\rho_2\in L^p(\cD)$
for some $p>\frac{d}{2\underline{\mu}}$. Then from the proof of Theorem~\ref{T2.4} we see that for every $x\in\cD$
$$
\abs{T_l^{\cD, V}\rho_2 (x)}\leq C\left[l^{-\frac{d}{2p\underline{\mu}}} \Ind_{(0, \kappa_2]}(l) +
\Ind_{(\kappa_2, \infty)}\right]\norm{\rho_2}_{L^p(\cD)},
$$
with a constant $C$. Thus by \eqref{Add3} it is seen that $\chi_t$ is continuous in $\cD$ for every $t>0$. Again by the proof of
Theorem~\ref{T2.4} we find
$$
\abs{\chi_t(x)}\leq \kappa_3 \norm{\rho_2}_{L^p(\cD)} (t^{\alpha-1-\frac{d\alpha}{2p\underline{\mu}}}+1), \quad t>0, \; x\in\cD,
$$
with a constant $\kappa_3$. Observe from \eqref{ET2.2A} that
$$
\varphi(t, x)=\int_0^t \rho_1(t-s)\chi_s(x) \D s.
$$
Let $x_n\to x\in\cD$. Then
\begin{align*}
\int_0^T \abs{\varphi(t, x_n)-\varphi(t, x)} \D{t}
\leq \int_0^T \int_0^t \abs{\rho_1(t-s)}\abs{\chi_s(x_n)-\chi_s(x)}\,\D{s}\,\D{t}.
\end{align*}
Using the above bounds on $\chi_t$ and its continuity in $\cD$, it follows by Young's inequality and dominated convergence that
the right hand side above tends to zero as $n\to\infty$. This shows the first part of the theorem.

To obtain the second part, note that $\chi_\cdot(x_0)\in L^1(0, T)$. By the given condition we also have
$$
\int_0^t \rho_1(s) \chi_{t-s}(x_0) \D s=0, \quad  t\in[0, T].
$$
The above equality makes sense due to the continuity result we proved above. By Titchmarsh's theorem \cite{D98, T26} there exist
non-negative $\kappa_1, \kappa_2$ with $\kappa_1+\kappa_2\geq T$ and $\rho_1=0$ almost everywhere in $(0, \kappa_1)$ and
$\chi_\cdot(x_0)=0$ almost everywhere in $(0, \kappa_2)$. However, by our assumption on $\rho_2$ it follows that $T^{\cD, V}_l\rho_2>0$
on $\cD$ (see the proof of Theorem~\ref{T2.3}) and $g_1>0$ on $(0, \infty)$, implying $\chi_\cdot(x_0)>0$ in every $(0, \kappa_2)$ for
$\kappa_2>0$. Thus $\kappa_1\geq T$, which completes the proof of the theorem.
\end{proof}

\subsection*{Acknowledgments}
This research of AB was supported in part by an INSPIRE faculty fellowship and a DST-SERB grant EMR/2016/004810.

\end{document}